\def\a{\alpha}
\def\b{\beta}
\def\D{\Delta}
\def\e{\epsilon}
\def\t{\theta}
\def\T{\Theta}
\def\s{\sigma}
\def\o{\omega}
\def\cf{\textit{cf. }}
\def\NN{\mathbb N}
\def\RR{\mathbb R}
\def\fcar{\mathds{1}}
\def\esp{\mathbf E}
\def\prob{\mathbf P}
\def\nzeroun{\mathcal{N}(0,1)}
\theoremstyle{plain}
\newtheorem{theorem}{Theorem}
\newtheorem{lemma}{Lemma}
\newtheorem{proposition}{Proposition}
\newtheorem{corollary}{Corollary}
\newtheorem*{theorem*}{Theorem}
\newtheorem*{lemma*}{Lemma}
\newtheorem*{proposition*}{Proposition}
\newtheorem*{corollary*}{Corollary}
\theoremstyle{remark}
\newtheorem*{remark*}{Remark}
\newtheorem*{note*}{Note}
\theoremstyle{definition}
\newtheorem*{definition*}{Definition}
\begin{document}

%\begin{frontmatter}

\title{Optimal adaptive estimation of linear  functionals under sparsity}
%\runtitle{Adaptive estimation of linear  functionals under sparsity}
%\begin{aug}
%\author{\fnms{Olivier} \snm{Collier} \thanksref{m1}},
%\author{\fnms{La\"etitia} \snm{Comminges} \thanksref{m2}}
%\author{\fnms{Alexandre B.} \snm{Tsybakov} \thanksref{m3}}
%\and 
%\author{\fnms{N.} \snm{Verzelen} \thanksref{m4}}
%\affiliation{Modal'X, Universit\'e Paris-Ouest\thanksmark{m1}, Universit\'e Paris Dauphine\thanksmark{m2}, CREST, ENSAE, Universit\'e Paris-Saclay\thanksmark{m3} and INRA\thanksmark{m4}}
%\end{aug}
%\date{}

\author{O. Collier, L. Comminges, A.B. Tsybakov and N. Verzelen}
\maketitle

%\author{\fnms{O.} \snm{Collier}}%\thanksref{m1} },
%\author{\fnms{L.} \snm{Comminges}}% \thanksref{m2}}
%\author{\fnms{A.B.} \snm{Tsybakov}}% \thanksref{m3}}
%\and 
%\author{\fnms{N.} \snm{Verzelen}% \thanksref{m4}}
%\affiliation{Universit\'e Paris 10\thanksmark{m1},  Universit\'e Paris Dauphine\thanksmark{m2}, CREST, ENSAE, Universit\'e Paris-Saclay\thanksmark{m3} and INRA\thanksmark{m4} }
\date{\today}

\begin{abstract} \
We consider the problem of estimation of a linear functional in the Gaussian sequence model where the unknown vector $\t\in \RR^d$ belongs to a class of $s$-sparse vectors with unknown $s$. We suggest an adaptive estimator achieving a non-asymptotic rate of 
convergence that differs from the minimax rate at most by a logarithmic factor. We also show that this optimal adaptive rate cannot be improved when $s$ is unknown. Furthermore, we address the issue of simultaneous adaptation to $s$ and to the variance $\s^2$ of the noise. We suggest an estimator that achieves the optimal adaptive rate when both $s$ and $\s^2$ are unknown. 
\end{abstract}
%\maketitle

%\begin{keyword}
%\kwd{nonasymptotic minimax estimation}
%\kwd{adaptive estimation}
%\kwd{linear functional}
%\kwd{sparsity}
%\kwd{unknown noise variance}
%\end{keyword}

%\end{frontmatter}

\section{Introduction}

%\subsection*{Model}

We consider the model
\begin{equation}\label{model}
y_j= \t_j + \s\xi_j, \quad j=1,\dots,d,
\end{equation}
where $\t=(\t_1,\dots,\t_d)\in\RR^d$ is an unknown vector of parameters, $\xi_j$ are i.i.d. standard normal random variables, and $\s>0$ is the noise level. We study the problem of estimation of the linear functional
$$
L(\t)=\sum_{i=1}^d \t_i,
$$
based on the observations $y=(y_1,\dots,y_d)$. 

For $s\in \{1,\dots, d\}$, we denote by $\Theta_{s}$ the class  of all $\t\in\RR^d$ satisfying $\|\t\|_0 \le s$, where $\|\t\|_0$ denotes the number of non-zero components of $\t$. We assume that $\t$ belongs to $\Theta_{s}$ for some $s\in \{1,\dots, d\}$. Parameter $s$ characterizes the sparsity of vector $\theta$. 
The problem of estimation of $
L(\t)$ in this context arises, for example, if one wants to estimate the value of a function $f$ at a fixed point from noisy observations of its Fourier coefficients knowing that the function admits a sparse representation with respect to the first $d$ functions of the Fourier basis. Indeed, in this case the value $f(0)$ is equal to the sum of Fourier coefficients of $f$ with even indices.

As a measure of quality of an estimator $\hat T$ of the functional $L(\t)$ based on the sample $(y_1,\dots,y_d)$, we consider the maximum squared risk   
\begin{equation*}\label{definition_nonadaptive_risk}
\psi_{s}^{\hat{T}} \triangleq 
\sup_{\t\in \T_{s}} \esp_\t(\hat{T} - L(\t))^2,
\end{equation*}
where $\esp_\t$ denotes the expectation with respect to the distribution $\prob_\t$ of $(y_1,\dots,y_d)$ satisfying~\eqref{model}. For each fixed $s\in \{1,\dots, d\}$, the best quality of estimation is characterized by the minimax risk
\begin{equation*}
\psi^*_{s} \triangleq \inf_{\hat{T}} \sup_{\t\in \T_{s}} \esp_\t(\hat{T} - L(\t))^2,
\end{equation*}
where the infimum is taken over all estimators. An estimator $T^*$ is called rate optimal on $\T_s$ if $\psi_{s}^{T^*} \asymp \psi^*_{s}$. Here and in the following we write $a(d,s,\s) \asymp b(d,s,\s)$ for two functions $a(\cdot)$ and $b(\cdot)$ of $d, s$ and $\s$ if 
there exist absolute constants $c>0$ and $c'>0$ such that $c<a(d,s,\s)/b(d,s,\s)<c'$ for all $d$, all $s\in \{1,\dots,d\}$ and all $\s>0$.

The problem of estimation of the linear functional from the minimax point of view has been analyzed in  \cite{IbragimovHasminskii1984,CaiLow2004,CaiLow2005,Golubev2004, GolubevLevit2004,LaurentLudenaPrieur2008} among others. Most of these papers study minimax estimation of linear functionals on classes of vectors $\t$ different from $\T_s$. Namely, $\t$ is considered as a vector of first $d$ Fourier or wavelet coefficients of functions belonging to some smoothness class, such as Sobolev or Besov classes. In particular, the class of vectors $\t$ is assumed to be convex, which is not the case of class $\T_s$. \citet{CaiLow2004} were the first to address the problem of constructing rate optimal estimators of $L(\t)$ on  the sparsity class $\T_s$ and evaluating the minimax risk $\psi^*_{s}$. They studied the case $s<d^a$ for some $a<1/2$, with  $\sigma = 1/\sqrt{d}$, and established upper and lower bounds on $\psi^*_{s}$ that are accurate up to a logarithmic factor in $d$. The sharp non-asymptotic expression for the minimax risk $\psi^*_{s}$ is derived in  \cite{CollierCommingesTsybakov2015}  where it is shown that, for all $d$, all $s\in \{1,\dots,d\}$ and all $\s>0$
\begin{equation*}
\psi^*_{s} \asymp \s^2s^2\log(1+d/s^2).
\end{equation*}
Furthermore,  \cite{CollierCommingesTsybakov2015} proves that a simple estimator of the form
\begin{equation}\label{minimax_estimator}
\hat{L}_s^* = \begin{cases} \sum_{j=1}^d y_j \fcar_{y_j^2>2\s^2\log(1+d/s^2)}, &\text{ if } s<\sqrt{d}, \\
\sum_{j=1}^d y_j, &\text{ otherwise}, \end{cases}
\end{equation}
is rate optimal. Here and in the following, $\fcar_{\{\cdot\}}$ denotes the indicator function.

Note that the minimax risk $\psi^*_{s}$ critically depends on the parameter $s$ that in practice is usually unknown. More importantly, the rate optimal estimator $\hat{L}_s^* $ depends on $s$ as well, which makes it inaccessible in practice. 

In this paper, we suggest adaptive estimators of $L(\t)$ that do not depend on $s$ and achieve a non-asymptotic rate of 
convergence $\Phi^L(\s,s)$ that differs from the minimax rate $\psi^*_{s}$ at most by a logarithmic factor. We also show that this rate cannot be improved when $s$ is unknown in the sense of the definition that we give in Section \ref{sec:main} below. 
Furthermore, in Section~\ref{sec:unknown_sigma} we address the issue of simultaneous adaptation to $s$ and $\s$. We suggest an estimator that achieves the best rate  of adaptive estimation $\Phi^L(\s,s)$ when both $s$ and $\s$ are unknown.

\section{Main results}\label{sec:main}

Our aim is to show that the optimal adaptive rate of convergence is of the form 
$$
\Phi^L(\s,s) = \s^2s^2\log(1+d(\log d)/s^2)
$$
and to construct an adaptive estimator attaining this rate. 
Note that  
\begin{equation}\label{dlogd0}
\Phi^L(\s,s)\asymp \s^2 d(\log d), \quad \text{for all} \ \sqrt{d \log d}\le s \le d.
\end{equation}
Indeed, since the function $x\mapsto x\log(1+1/x)$ is increasing for $x>0$,
\begin{equation}\label{dlogd}
d(\log d)/2 \le s^2\log(1+d(\log d)/s^2)\le  d(\log d), \quad   \forall \ \sqrt{d \log d}\le s\le d, \ d\ge 3.
\end{equation}

To construct an adaptive estimator, we  first consider a collection of non-adaptive estimators  indexed by $s=1,\dots,d$:
\begin{equation}\label{definition_estimators_linear}
\hat{L}_s = \begin{cases} \sum_{j=1}^d y_j \fcar_{y_j^2>\a\s^2\log(1+d(\log d)/s^2)}, &\text{ if } s\leq\sqrt{d \log d/2} , \\
\sum_{j=1}^d y_j, &\text{ otherwise}, \end{cases}
\end{equation}
where $\a>0$ is a constant that will be chosen large enough. %Similarly, consider a collection of non-adaptive estimators of the
%quadratic functional:
%\begin{equation}\label{definition_estimators_quadratic}
%\hat{Q}_s = \begin{cases} \sum_{j=1}^d (y_j^2-m_s\s^2) \fcar_{y_j^2>\a\s^2\log(1+d(\log d)/s^2)}, &\text{ if } s<\sqrt{d \log d}, \\
%\sum_{j=1}^d (y^2_j-\s^2), &\text{ otherwise}, \end{cases}
%\end{equation}
%where $\a>0$ is a constant that will be chosen large enough, and 
%\begin{equation}\label{ms}
%m_s = \esp\big(X^2\suchthat X^2>\a\log(1+d(\log d)/s^2)\big), \quad \text{where} \quad X\sim\nzeroun.
%\end{equation}
Note that if  in definition \eqref{definition_estimators_linear} we replace $d(\log d)$ by $d$, and $\a$ by $2$, we obtain the estimator $\hat{L}_s^*$ suggested in \cite{CollierCommingesTsybakov2015}, cf. \eqref{minimax_estimator}. It is proved in \cite{CollierCommingesTsybakov2015} that the estimator $\hat{L}_s^*$ is rate optimal in the minimax non-adaptive sense. %We will show below that 
The additional $\log d$ factor is necessary to achieve adaptivity as it will be clear from the subsequent arguments. %This logarithmic factor can be considered as a price to pay for adaptation.

We obtain an adaptive estimator via data-driven selection in the collection of estimators $\{\hat{L}_s \}$. The selection is based on a Lepski type scheme. For $s=1,\dots, d$, consider the thresholds $\o_s>0$ given by  
$$
\o_s^2 = \b \s^2 s^2 \log(1+d(\log d)/s^2) = \b \Phi^L(\s,s),
$$
% We first introduce the quantities $\o_s^L>0$ and $\o_s^Q> 0$ in the following way:
%\begin{equation}\label{definition_omega}
%\begin{cases}
%\big(\o_s^L\big)^2 = \b \s^2 s^2 \log(1+d\log d/s^2), \\
%\big(\o_s^Q\big)^2 = \b \max\big(\s^4s^2\log^2(1+d(\log d)/s^2)\wedge \s^4d(\log d) ,\s^2\k^2\log(d)\big), 
%\end{cases}
%\end{equation}
where $\b>0$ is a constant that will be chosen large enough.
We define the selected index $\hat{s}$ by the relation
\begin{equation}\label{definition_hats}
\hat{s} \triangleq \min\Big\{s\in \{1,\dots,\lfloor \sqrt{d \log d/2}\rfloor\}: \,  |\hat{L}_s-\hat{L}_{s'}|\le \o_{s'} \ \text{for all} \ s'>s\Big\}
\end{equation}
with the convention that $\hat{s} =\lfloor \sqrt{d \log d/2}\rfloor+1$ if the set in \eqref{definition_hats} is empty. Here, $\lfloor \sqrt{d \log d/2}\rfloor$ denotes the largest integer less than $\sqrt{d \log d/2}$. %Note that we always have $\hat{s}<\sqrt{d \log d}$ since $ \hat{L}_s=\hat{L}_d$ for $s\ge \sqrt{d \log d}$.
Finally, we define an adaptive to $s$ estimator of $L$ as
\begin{equation}\label{definition_estimators_l0}
 \hat{L} \triangleq \hat{L}_{\hat{s}}.
 \end{equation}
%\begin{equation}\label{definition_estimators_l0}
%\begin{cases}
%\ \hat{L} \triangleq \hat{L}_{\hat{s}^L}, \\
%\ \hat{Q} \triangleq \hat{Q}_{\hat{s}^Q}.
%\end{cases}
%\end{equation}
The following theorem exhibits an upper bound on its risk.

\begin{theorem}\label{theorem_l0_upperbound}
Assume that $\a>48$, $\b\ge\frac{16}9(\sqrt{12}+2\sqrt{\a})^2$ and $d\ge d_0$, where $d_0\ge3$ is an absolute constant. Let $\hat{L}$~be the estimator defined in~(\ref{definition_estimators_l0}). Then, for all $\s>0$ and $s\in \{1,\dots,d\}$ we have
\begin{equation*}
\sup_{\t\in \T_{s}} \esp_\t(\hat{L} - L(\t))^2 \le C\Phi^L(\s,s)
\end{equation*}
for some absolute constant $C$.
%\begin{eqnarray*}
%%\begin{cases}
%&& \Phi^L(\s,s) = \s^2s^2\log(1+d(\log d)/s^2), \\
%&& \Phi^Q(\s,s) =  \max\big(\s^4s^2\log^2(1+d(\log d)/s^2)\wedge \s^4d(\log d) ,\s^2\k^2\log(d)\big).
%%\end{cases}
%\end{eqnarray*}
\end{theorem}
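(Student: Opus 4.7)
The plan is a standard Lepski-type analysis. Fix $\t\in\T_{s}$, let $s_0=\|\t\|_0\le s$, and abbreviate $\Phi_{s}:=\Phi^L(\s,s)$. The strategy is to exhibit a high-probability event $\mathcal A$ on which the selection rule \eqref{definition_hats} forces $\hat s\le s_0$ and bounds $|\hat L-L(\t)|$ by a constant multiple of $\o_{s_0}\asymp\sqrt{\Phi_{s_0}}$; on the complementary event we control the risk by Cauchy--Schwarz together with a polynomial fourth-moment bound on each $\hat L_s$.

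The preparatory step, and the principal technical obstacle, is a pointwise deviation inequality
$$
\prob_\t\!\left(|\hat L_s - L(\t)|\ge \o_s/2\right)\le \delta_s,\qquad \t\in\T_s,
$$
with $\delta_s$ polynomially small in $d$ and summable. Decompose
$$
\hat L_s - L(\t)=\sum_{j:\t_j\neq 0}\!\!\bigl[(\t_j+\s\xi_j)\fcar_{(\t_j+\s\xi_j)^2>t_s}-\t_j\bigr]\;+\;\s\!\!\sum_{j:\t_j= 0}\!\!\xi_j\fcar_{\s^2\xi_j^2>t_s},
$$
with $t_s=\a\s^2\log(1+d(\log d)/s^2)$. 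The second sum, involving at most $d$ centred truncated standard Gaussians whose indicators each fire with probability of order $(1+d\log d/s^2)^{-\a/2}$, is handled by Gaussian tail estimates followed by Bernstein-type concentration, and yields fluctuations $\lesssim \o_s$ precisely when $\a$ exceeds the stated numerical constant. The first sum has at most $s$ summands; its total bias is at most $s\sqrt{t_s}$, which is $\le \o_s$ as soon as $\b\ge\tfrac{16}{9}(\sqrt{12}+2\sqrt\a)^2$, and its centred part is a Gaussian sum over $\operatorname{supp}(\t)$, easily handled. Balancing the two tail contributions produces the explicit constraints on $\a$ and $\b$ in the theorem.

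With this deviation bound in hand, first suppose $s_0\le\lfloor\sqrt{d\log d/2}\rfloor$ and set $\mathcal A=\bigcap_{s'=s_0}^{d}\{|\hat L_{s'}-L(\t)|\le \o_{s'}/2\}$. Since $s\mapsto\o_s$ is non-decreasing, on $\mathcal A$ every $s'>s_0$ satisfies $|\hat L_{s_0}-\hat L_{s'}|\le\o_{s_0}/2+\o_{s'}/2\le\o_{s'}$, so $s_0$ qualifies in the minimisation \eqref{definition_hats} and $\hat s\le s_0$. Plugging $s'=s_0$ into the definition of $\hat s$ then gives $|\hat L_{\hat s}-\hat L_{s_0}|\le\o_{s_0}$, whence $|\hat L-L(\t)|\le 3\o_{s_0}/2$ and $(\hat L-L(\t))^2\le C\Phi_{s}$ on $\mathcal A$; by the union bound $\prob_\t(\mathcal A^c)\le\sum_{s'\ge s_0}\delta_{s'}=O(d^{-c})$ for some $c>1$ once $\a$ is large enough.

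The residual $\esp_\t[(\hat L-L(\t))^2\fcar_{\mathcal A^c}]$ is bounded by summing over the finitely many possible values of $\hat s$ and applying Cauchy--Schwarz: $\esp_\t[(\hat L_s-L(\t))^4]$ grows only polynomially in $d$ (using $|\hat L_s|\le\sum_j|y_j|$ and the sparsity of $\t$), and combining with $\prob_\t(\mathcal A^c)^{1/2}=O(d^{-c/2})$ absorbs this into $\Phi_{s}$ provided $\a$ is large enough, which is the role of the lower bound $\a>48$. The case $s_0>\lfloor\sqrt{d\log d/2}\rfloor$ is easier: by \eqref{dlogd0}, $\Phi_{s}\asymp\s^2 d\log d$ dominates both the variance $\s^2 d$ of the plain-sum estimator $\hat L_{s'}=\sum_j y_j$ (which $\hat L_{s'}$ becomes for every $s'>\sqrt{d\log d/2}$) and $\o_{s'}^2$ for every admissible $s'$, so rerunning the previous argument with $s_0$ replaced by $\lfloor\sqrt{d\log d/2}\rfloor$ delivers the same bound. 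The conceptual point is that the additional $\log d$ inside the logarithm of $\Phi^L$ is precisely what is needed to make $\sum_{s'}\delta_{s'}$ summable while preserving the correct $s$-dependence; this is where the unavoidable cost of adaptation enters.
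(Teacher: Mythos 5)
Your proposal is correct and follows essentially the same route as the paper's proof: a Lepski-type split on whether the selected index exceeds the oracle one, per-index deviation and risk bounds for $\hat L_{s}$ obtained from the support/non-support decomposition, and a Cauchy--Schwarz plus polynomial fourth-moment bound on the low-probability bad event. The only place you are thinner than the paper is the concentration of the truncated-Gaussian sum $\sum_{i\notin S}\xi_i\fcar_{\xi_i^2>\a a^2}$, which you dispatch as ``Bernstein-type''; the paper devotes a separate lemma to this (bounded differences conditionally on the $|\xi_i|$, followed by the Fuk--Nagaev inequality for $\sum_i \xi_i^2\fcar_{\xi_i^2>\a a^2}$), and that is where the hypothesis $\a>48$ is actually exploited.
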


Observe that for small $s$ (such that $s\leq d^{b}$ for $b<1/2$), we have $1\leq \Phi^L(\s,s)/\psi_{s}^*\leq c'$ where $c'>0$ is an absolute constant. %and more generally, we have $1\leq \tilde{\psi}_{\sigma}^L(s,d)/\psi_{\sigma}^L(s,d)\leq \log(d)$. 
Therefore, for such $s$ our estimator $\hat L$ attains the best possible rate on $\T_s$ given by the minimax risk $\psi_s^*$ and it cannot be improved, even by estimators depending on~$s$.  Because of this, the only issue is to check that the rate $\Phi^L(\s,s)$ cannot be improved if $s$ is greater than $d^b$ with $b<1/2$. For definiteness, we consider below the case $b=1/4$ but with minor modifications the argument applies to any $b<1/2$.
Specifically, we prove that any estimator whose maximal risk over $\T_s$ is  smaller (within a small constant) than $\Phi^L(\s,s)$ for some $s\ge d^{1/4}$, must have a maximal risk over $\T_1$ of power order in $d$ instead of the logarithmic order  $\Phi^L(\s,1)$
corresponding to our estimator. In other words, if we find an estimator that improves upon our estimator only slightly (by a  constant factor) for some $s\ge d^{1/4}$, then this estimator inevitably loses much more for small $s$, such as $s=1$, since there the ratio of maximal risks of the two estimators behaves as a power of~$d$.

\begin{theorem}\label{th:adaptation}
Let $d\ge 6$ and $\s>0$. There exist two small absolute constants $C_0>0$ and $C_1>0$ such that the following holds.  Any estimator $\widehat{T}$ that satisfies \[\sup_{\theta\in \T_s} \mathbf{E}_\theta\big[\big(\widehat{T}- L(\theta) \big)^2 \big]\leq C_0\Phi^L(\s,s) \quad \text{for some}\ s\geq d^{1/4} \]
has a degenerate maximal risk over $\T_1$, that is   
\[\sup_{\theta\in \T_1} \mathbf{E}_\theta\big[\big(\widehat{T}- L(\theta) \big)^2 \big]\geq C_1 \sigma^2 d^{1/4}\ .\] 
\end{theorem}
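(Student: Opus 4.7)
The plan is a Le Cam two-point reduction with the null $\t=0$ (which lies in $\T_1$) on one side and a sparse Bernoulli mixture prior $\pi$ supported in $\T_s$ on the other. Since $0\in\T_1$, any lower bound $\esp_0\widehat{T}^{\,2}\gtrsim\s^2 d^{1/4}$ transfers immediately to the claim $\sup_{\t\in\T_1}\esp_\t(\widehat{T}-L(\t))^2\gtrsim \s^2d^{1/4}$. The core task is thus to construct $\pi$ that is hard to distinguish from $P_0$ in $\chi^2$ yet whose expected functional value $L_\pi:=\esp_\pi L(\t)$ is large enough to exceed the risk budget $C_0\Phi^L(\s,s)$ allowed by the hypothesis.

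Set $\widetilde s=\min(s,\lfloor\sqrt d\rfloor)$ and let $\pi$ be the product prior under which each $\t_j$ is independently equal to $\mu$ with probability $p=\widetilde s/d$ and to $0$ otherwise, with amplitude $\mu^2=\kappa\s^2\log(1+d/\widetilde s^{\,2})$ for a suitably small absolute constant $\kappa>0$. A short calculation gives
\[
\chi^2(P_\pi,P_0)=\bigl(1+p^2(e^{\mu^2/\s^2}-1)\bigr)^d-1\leq \eta
\]
for an absolute $\eta<1$, so $\mathrm{TV}(P_\pi,P_0)\leq 1/2$. Under $\pi$, $L(\t)=\mu\cdot\mathrm{Bin}(d,p)$ concentrates around $L_\pi=\widetilde s\,\mu$ with $L_\pi^2\asymp\s^2\widetilde s^{\,2}\log(1+d/\widetilde s^{\,2})\gtrsim\s^2 d^{1/4}$ (using $\widetilde s\geq d^{1/4}$). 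A Chernoff bound yields $\|\t\|_0\leq s$ with probability at least $1-e^{-c\widetilde s}$, so conditioning on this event keeps $\pi$ supported in $\T_s$ without changing the above quantities by more than a constant factor.

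Le Cam's two-point inequality for squared-error functional estimation then gives, for any estimator $\widehat T$,
\[
\max\bigl\{\esp_0\widehat T^{\,2},\;\esp_\pi(\widehat T-L(\t))^2\bigr\}\geq \tfrac14 L_\pi^2\bigl(1-\mathrm{TV}(P_\pi,P_0)\bigr)\geq c_1 L_\pi^2,
\]
for some small absolute $c_1>0$. By hypothesis $\esp_\pi(\widehat T-L(\t))^2\leq C_0\Phi^L(\s,s)$ since $\pi$ lives in $\T_s$; choosing the absolute constant $C_0$ small enough that $C_0\Phi^L(\s,s)<c_1 L_\pi^2$ forces the first term to attain the maximum, yielding $\esp_0\widehat T^{\,2}\geq c_1 L_\pi^2\geq C_1\s^2 d^{1/4}$, which is the claim.

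The step that needs most care is ensuring $c_1 L_\pi^2>C_0\Phi^L(\s,s)$ holds with an absolute $C_0$ over the full range $s\in[d^{1/4},d]$. For $s\leq\sqrt d$ one has $\widetilde s=s$ and the ratio $L_\pi^2/\Phi^L(\s,s)\asymp\log(1+d/s^2)/\log(1+d\log d/s^2)$ stays bounded below by a positive absolute constant (both logarithms are of order $\log d$ in this regime), so any sufficiently small $C_0$ suffices. For $s>\sqrt d$, $\widetilde s$ saturates at $\sqrt d$ and $L_\pi^2\asymp\s^2 d$ while $\Phi^L(\s,s)$ can reach $\s^2 d\log d$; this range is the technical heart, and I expect to handle it either by a refined prior (e.g.\ concentrating amplitude on fewer coordinates at a higher level to boost $L_\pi^2$) or by directly using the Cauchy--Schwarz/$\chi^2$-type bound
\[
\sqrt{\esp_0\widehat T^{\,2}}\,(1+\sqrt{\chi^2})\geq L_\pi-\sqrt{C_0\Phi^L(\s,s)},
\]
which only requires $L_\pi-\sqrt{C_0\Phi^L(\s,s)}\gtrsim \s d^{1/8}$ — a much weaker inequality than $L_\pi^2\gtrsim C_0\Phi^L$ — and so can be salvaged with absolute constants by calibrating the prior to match this modest target.
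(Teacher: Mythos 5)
There is a genuine gap in the main line of your argument, and it occurs exactly in the regime the theorem is designed to handle. With a symmetric Le Cam reduction you must keep $\chi^2(P_\pi,P_0)$ (hence the total variation) bounded, and this caps the achievable separation at $L_\pi^2\asymp \s^2\widetilde s^{\,2}\log(1+d/\widetilde s^{\,2})\asymp\psi^*_{\widetilde s}$, the \emph{minimax} rate. But the quantity you must beat is $C_0\Phi^L(\s,s)=C_0\s^2s^2\log(1+d(\log d)/s^2)$, and the ratio $\psi^*_s/\Phi^L(\s,s)$ is \emph{not} bounded below by an absolute constant on $[d^{1/4},\sqrt d]$: for $s=\sqrt d$ one has $\log(1+d/s^2)=\log 2$ while $\log(1+d(\log d)/s^2)\asymp\log\log d$, so your claim that ``both logarithms are of order $\log d$'' fails for $s$ near $\sqrt d$, and the condition $c_1L_\pi^2>C_0\Phi^L(\s,s)$ cannot be met with an absolute $C_0$. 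For $s>\sqrt d$ the situation is worse ($L_\pi^2\lesssim\s^2 d$ versus $\Phi^L\asymp\s^2 d\log d$), as you note. This is not a technicality to be patched: the whole content of the theorem is that paying the \emph{inflated} rate $\Phi^L$ (rather than $\psi^*_s$) on $\T_s$ already forces a polynomial loss at $\T_1$, and a bounded-$\chi^2$ prior can never witness a separation of order $\Phi^L$ in the regime where $\Phi^L\gg\psi^*_s$.

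Your fallback sketch is in fact the correct idea and is essentially what the paper does, but it is left unexecuted and requires recalibrating the prior, not just the inequality. The paper takes $\theta=\s\rho\sum_{j\in S}e_j$ with $S$ uniform over supports of size exactly $s$ and $\rho^2=(1/2-a)\log(1+d(\log d)/s^2)$, so that $L_\pi^2=(1/2-a)\Phi^L(\s,s)$ genuinely exceeds the assumed risk budget; the price is $\chi^2(\mathbb P_{\mu_\rho},\mathbf P_0)\le d^{1/2-a}$ (polynomially large, via $(1+x)^{1/2-a}\le1+(1/2-a)x$). This is then fed into an \emph{asymmetric} testing bound (Lemma~\ref{lem:unbalanced}, a weighted version of the Neyman--Pearson/$\chi^2$ argument with $\tau\asymp d^{-(1/2-a)}$), which tolerates the large divergence precisely because the conclusion demanded at $\theta=0$ is only $\esp_0\widehat T^2\gtrsim\s^2 d^{3a-1/2}=\s^2d^{1/4}$, far weaker than $L_\pi^2$. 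To complete your proof you would need to (i) raise the amplitude to $\mu^2\asymp\s^2\log(1+d(\log d)/s^2)$ with a constant strictly below $1/2$ in the exponent, (ii) verify the resulting bound $\chi^2\le d^{1/4}$ for your Bernoulli prior (this does go through by the same convexity inequality), and (iii) carry out the asymmetric inequality $\sqrt{\esp_0\widehat T^2}\,(1+\sqrt{\chi^2})\ge L_\pi-\sqrt{C_0\Phi^L(\s,s)}-\sqrt{\mathrm{Var}_\pi(L)}$ including the variance term of $L(\theta)$ under the Bernoulli prior, which you currently omit. As it stands, the primary argument fails for $s\in[\sqrt{d/\log d},\,d]$ and the repair is only named, not proved.
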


%The next theorem shows that the performance of the adaptive estimator stated in Theorem~\ref{theorem_l0_upperbound} is optimal in the sense of~(\ref{definition_optimality_criterion}). 
%
%\begin{theorem}\label{theorem_l0_lowerbound}
%%
%There exists a positive constant $c>0$ such that for any estimator $\tilde{L}$ of $L$ we have
%\begin{equation*}
%c\phi_{\s}^* \lhd \phi_{\s}^{\tilde{L}},
%\end{equation*}
%where 
%\begin{equation}\label{infl}
%\phi_{\s}^*(s) =
%\begin{cases}
% \frac{\log(1+d(\log d)/s^2)}{\log(1+d/s^2)} &\text{if} \ \ s < \sqrt{d \log d},\\
% \log(d)& \text{otherwise}.
%\end{cases}
%\end{equation}
%\end{theorem}

The property obtained in Theorem~\ref{th:adaptation} can be paraphrased in an asymptotic context to conclude that $\Phi^L(\s,s)$ is the adaptive rate of convergence on the scale of classes $\{\T_s, s=1,\dots, d\}$ in the sense of the definition in \cite{Tsybakov1998}. 
Indeed, assume that $d\to \infty$. Following  \cite{Tsybakov1998}, we call a function $s\mapsto \Psi_d(s)$ the {\it  adaptive rate of convergence on the scale of classes $\{\T_s, s=1,\dots, d\}$} if the following holds.
\begin{itemize}
\item[(i)] There exists an estimator $\hat{L}$ such that, for all $d$,
\begin{equation}\label{definition_optimality_criterion0}
 \max_{s=1,\dots,d}\ \sup_{\t\in \T_{s}} \esp_\t(\hat{L} - L(\t))^2/\Psi_d(s) \le C,
 \end{equation}
where $C>0$ is a constant (clearly, such an estimator $\hat{L}$ is adaptive since it cannot depend on $s$). 
\item[(ii)] If there exist another function $s\mapsto \Psi_d'(s)$ and a constant $C'>0$ such that, for all $d$,
\begin{equation}\label{definition_optimality_criterion00}
 \inf_{\hat{T}} \max_{s=1,\dots,d}\ \sup_{\t\in \T_{s}} \esp_\t(\hat{T} - L(\t))^2/\Psi_d'(s) \le C',
 \end{equation}
and 
\begin{equation}\label{def2}
\min_{s=1,\dots, d} \frac{\Psi_d'(s)}{\Psi_d(s)} \to 0 \ 
\text{as} \ d\to \infty,
\end{equation}
then there exists $\bar{s}\in \{1,\dots, d\}$ such that 
\begin{equation}\label{def3}
 \frac{\Psi_d'(\bar{s})}{\Psi_d(\bar{s})} \min_{s=1,\dots, d} \frac{\Psi_d'(s)}{\Psi_d(s)} \to \infty \ 
\text{as} \ d\to \infty.
\end{equation}
\end{itemize} 
In words, this definition states that the adaptive rate of convergence $\Psi_d(s)$ is such that any improvement of this rate for some $s$ (cf. \eqref{def2}) is possible only at the expense of much greater loss for another $\bar{s}$ (cf. \eqref{def3}).

\begin{corollary}\label{cor1}
The rate $\Phi^L(\s,s)$ is the adaptive rate of convergence on the scale of classes $\{\T_s, s=1,\dots, d\}$. 
\end{corollary}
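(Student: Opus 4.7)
Property (i) of the definition is immediate from Theorem~\ref{theorem_l0_upperbound} with $\Psi_d(\cdot)=\Phi^L(\sigma,\cdot)$, since the estimator $\hat L$ is oblivious to $s$. For property (ii), fix $\Psi_d'$ and $C'$ satisfying \eqref{definition_optimality_criterion00}--\eqref{def2}. Using the infimum in \eqref{definition_optimality_criterion00}, for each $d$ I pick an estimator $\widehat{T}_d$ with
\[\sup_{\theta\in\T_s}\mathbf{E}_\theta\bigl[(\widehat{T}_d-L(\theta))^2\bigr]\le 2C'\,\Psi_d'(s)\qquad\text{for all }s\in\{1,\dots,d\}.\]
Let $s_d$ be a minimizer of $s\mapsto \Psi_d'(s)/\Phi^L(\sigma,s)$ on $\{1,\dots,d\}$ and denote by $r_d$ the minimum value, so $r_d\to 0$ by \eqref{def2}.

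The main obstacle is to locate $s_d$: I first plan to show that $s_d\ge d^{1/4}$ for all sufficiently large $d$. The key observation is that for $s\le d^{1/4}$ one has $d/s^2\ge d^{1/2}$, so both $\log(1+d/s^2)$ and $\log(1+d\log d/s^2)$ are of order $\log d$; consequently the sharp minimax rate $\psi_s^*\asymp\sigma^2 s^2\log(1+d/s^2)$ recalled from \cite{CollierCommingesTsybakov2015} satisfies $\psi_s^*\asymp \Phi^L(\sigma,s)$ uniformly on that range. The trivial minimax lower bound
\[\psi_s^*\le \sup_{\theta\in\T_s}\mathbf{E}_\theta\bigl[(\widehat{T}_d-L(\theta))^2\bigr]\le 2C'\,\Psi_d'(s)\]
then forces $\Psi_d'(s)/\Phi^L(\sigma,s)\ge c_0$ for some absolute $c_0>0$ uniformly on $s\le d^{1/4}$. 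Since eventually $r_d<c_0$, this excludes $s_d\le d^{1/4}$ for large $d$.

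With $s_d\ge d^{1/4}$ in hand, the fact that $r_d\to 0$ also gives $2C'\Psi_d'(s_d)\le C_0\Phi^L(\sigma,s_d)$ once $d$ is large enough, so Theorem~\ref{th:adaptation} applied to $\widehat{T}_d$ yields $\sup_{\theta\in\T_1}\mathbf{E}_\theta[(\widehat{T}_d-L(\theta))^2]\ge C_1\sigma^2 d^{1/4}$, and therefore $\Psi_d'(1)\ge C_1\sigma^2 d^{1/4}/(2C')$. Since $\Phi^L(\sigma,1)=\sigma^2\log(1+d\log d)\asymp \sigma^2\log d$, the ratio $\Psi_d'(1)/\Phi^L(\sigma,1)$ is at least of order $d^{1/4}/\log d$, i.e.\ diverges polynomially in $d$. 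Choosing $\bar s=1$ in \eqref{def3}, this polynomial divergence overwhelms the vanishing $r_d$ at $s_d$, which establishes \eqref{def3} and completes the argument. The only non-routine ingredient is the dichotomy on $s_d$: everything else is a direct invocation of Theorem~\ref{th:adaptation} combined with the elementary asymptotics of $\Phi^L(\sigma,\cdot)$ and $\psi_s^*$.
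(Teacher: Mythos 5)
Your argument follows the same route as the paper: property (i) from Theorem~\ref{theorem_l0_upperbound}; the dichotomy showing that the minimizer of $s\mapsto \Psi_d'(s)/\Phi^L(\s,s)$ must lie in $\{s\ge d^{1/4}\}$ because $\psi^*_s\asymp\Phi^L(\s,s)$ there below; then Theorem~\ref{th:adaptation} applied at $s_d$ to force $\Psi_d'(1)\gtrsim \s^2 d^{1/4}$ and the choice $\bar s=1$.

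There is, however, a genuine gap in your last step. Condition \eqref{def3} requires the \emph{product} $\frac{\Psi_d'(1)}{\Phi^L(\s,1)}\cdot r_d\to\infty$, where $r_d=\min_s \Psi_d'(s)/\Phi^L(\s,s)\to 0$ by hypothesis. You assert that the polynomial divergence of the first factor ``overwhelms the vanishing $r_d$,'' but that is false without a quantitative lower bound on $r_d$: if $r_d$ were allowed to decay like $d^{-1}$, the product would tend to zero. The missing ingredient is exactly the minimax lower bound you already used on the range $s\le d^{1/4}$, now applied for all $s$: since $\Psi_d'(s)\ge \psi^*_s/(2C')$ and
\[
\frac{\psi^*_s}{\Phi^L(\s,s)}\asymp\frac{\log(1+d/s^2)}{\log(1+d(\log d)/s^2)}\ \ge\ \frac{c}{\log d}\qquad\text{for all }s\in\{1,\dots,d\},
\]
one gets $r_d\ge c'/\log d$, and then the product is at least of order $d^{1/4}/(\log d)^2\to\infty$. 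With this one display added (which is precisely the middle step of the paper's proof), your argument is complete and coincides with the paper's.
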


It follows from the above results that the rate $\Phi^L(\s,s)$ cannot be improved when adaptive estimation on the family of sparsity classes $\{\T_s, s=1,\dots, d\}$ is considered. The ratio between the best rate  of adaptive estimation $\Phi^L(\s,s)$ and the minimax rate $\psi^*_s$ is equal to
$$
\phi^*_s= \frac{\Phi^L(\s,s)}{\psi^*_s}=\frac{\log(1+d(\log d)/s^2)}{\log(1+d/s^2)}.
$$
As mentioned above, $\phi^*_s\asymp 1$ if $s\leq d^{b}$ for $b<1/2$. In a vicinity of $s=\sqrt{d}$ we have $\phi^*_s\asymp \log\log d$, whereas for $s\ge \sqrt{d\log d}$ the behavior of this ratio is logarithmic: $\phi^*_s\asymp \log d$. Thus, there are different regimes and we see that, in some of them,   
rate adaptive estimation of the linear functional on the sparsity classes is impossible without loss of efficiency as compared to the minimax estimation.  However, this loss is at most logarithmic in $d$. 

We study now the adaptive rate of convergence on restricted scale of classes $\{\T_s,  d^{r_1} \leq s\leq d^{r_2}\}$ for some $0<r_1<r_2 \leq 1$. 

\begin{proposition}\label{prp:adaptation_restricted}
Fix $0<r_1< r_2\leq 1$. 
The adaptive rate of convergence on the scale of classes $\{\T_s,d^{r_1} \leq s\leq d^{r_2} \}$ is
 $\Phi^L(\s,s)$ if $r_1 <1/2$ and   $\sigma^2 d$ if $r_1 \geq 1/2$. 

\end{proposition}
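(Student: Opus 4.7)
The plan is to distinguish cases based on the value of $r_1$. For $r_1 \geq 1/2$, every $s$ in the restricted scale exceeds $\sqrt d$, so the minimax rate $\psi^*_s \asymp \sigma^2 d$ is constant across the scale; the sample mean $\sum_j y_j$ has risk exactly $\sigma^2 d$ for every $\theta$, yielding property~(i) with $\Psi_d(s) = \sigma^2 d$, and any achievable $\Psi_d'(s)$ satisfies $\Psi_d'(s) \gtrsim \psi^*_s \asymp \sigma^2 d$ by the minimax lower bound of~\cite{CollierCommingesTsybakov2015}, so $\min_s \Psi_d'(s)/(\sigma^2 d)$ is bounded below and~\eqref{def2} cannot hold.

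For $r_1 < 1/2$, property~(i) is immediate from Theorem~\ref{theorem_l0_upperbound} applied on the full scale. For property~(ii), let $\hat T^*$ achieve $\Psi_d'$ with constant $C'$, set $m_d := \min_{s\in [d^{r_1},d^{r_2}]} \Psi_d'(s)/\Phi^L(\sigma,s)$, and let $s^*$ be a near-minimizer. Since $\Psi_d'(s) \gtrsim \psi^*_s/C'$ by the minimax lower bound, $m_d \gtrsim 1/\phi^*_{s^*} \gtrsim 1/\log d$; moreover $\phi^*_s$ is bounded by a constant whenever $s \leq d^{1/2-\eta}$ for any fixed $\eta>0$, so $m_d \to 0$ forces $s^*$ into a vicinity of $\sqrt d$ and in particular $s^* \geq d^{1/4}$ for $d$ large. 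This already dispatches the sub-case $r_2 < 1/2$, where $s^*$ cannot stay in the scale, so~\eqref{def2} is vacuous. In the remaining sub-case $r_2 \geq 1/2$ I invoke a shifted variant of Theorem~\ref{th:adaptation}: fix a set $S_0\subset\{1,\dots,d\}$ of cardinality $s_0 := \lfloor d^{r_1} \rfloor$ and $\theta^0 \in \RR^d$ supported on $S_0$, and set $\hat T^{\theta^0,*}(y) := \hat T^*(y+\theta^0) - L(\theta^0)$. A change of variables gives $\mathbf{E}_\theta(\hat T^{\theta^0,*} - L(\theta))^2 = \mathbf{E}_{\theta+\theta^0}(\hat T^* - L(\theta+\theta^0))^2$, so $\theta\in\Theta_k$ (with support disjoint from $S_0$) induces $\theta+\theta^0\in\Theta_{k+s_0}$, and applying Theorem~\ref{th:adaptation} to $\hat T^{\theta^0,*}$ at $k = s^*-s_0 \geq d^{1/4}$ produces, after reversing the shift, a lower bound on $\Psi_d'(1+s_0)$ at the in-scale candidate $\bar s := 1+s_0 \in [d^{r_1}, d^{r_2}]$ for~\eqref{def3}.

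The main obstacle will be making the product in~\eqref{def3} diverge throughout the range $r_1 < 1/2$. The direct lower bound $\Psi_d'(\bar s) \geq c\sigma^2 d^{1/4}$ furnished by Theorem~\ref{th:adaptation} becomes insufficient once $r_1 \geq 1/8$: since $\Phi^L(\sigma, d^{r_1}) \asymp \sigma^2 d^{2r_1}\log d$, the product behaves as $d^{1/4-2r_1}/\log^2 d$, which diverges only for $r_1 < 1/8$. To close the gap one must reexamine the two-point construction behind Theorem~\ref{th:adaptation} within the restricted scale, comparing priors on $\Theta_{\lfloor d^{r_1}\rfloor}$ and $\Theta_{s^*}$ (both in the scale) and tuning the non-zero magnitudes to the adaptive gap at $s^*$; this should deliver a quantitative lower bound of the form $\Psi_d'(\bar s) \gtrsim \Phi^L(\sigma, s^*) \asymp \sigma^2 d \log\log d$ (up to logarithmic factors), which combined with $m_d \gtrsim 1/\log d$ makes the product diverge for every $r_1 < 1/2$.
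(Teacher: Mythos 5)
Your treatment of the case $r_1\ge 1/2$ and of the sub-case $r_2<1/2$ is correct and matches the paper, and your reduction of the case $r_1<1/2$ to exhibiting a point $\bar s$ inside the scale where the risk must blow up is the right strategy. As an aside, the shifting construction $\hat T^{\theta^0,*}$ is superfluous: the hypothesis of Theorem~\ref{th:adaptation} is already verified at $k=s^*$ itself, and since $\T_1\subset\T_{\lceil d^{r_1}\rceil}$ the conclusion $\sup_{\theta\in\T_1}\esp_\theta(\hat T^*-L(\theta))^2\ge C_1\s^2 d^{1/4}$ transfers to $\T_{\lceil d^{r_1}\rceil}$ by inclusion alone.

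The genuine gap is exactly where you locate it: for $r_1\in[1/8,1/2)$ the bound $\s^2 d^{1/4}$ coming from Theorem~\ref{th:adaptation} does not dominate $\Phi^L(\s,\lceil d^{r_1}\rceil)\asymp \s^2 d^{2r_1}\log d$, and your proposed repair is a conjecture (``this should deliver\dots'') rather than a proof. Moreover the target you set, $\Psi_d'(\bar s)\gtrsim \s^2 d\log\log d$, is stronger than needed and it is unclear it is attainable: shrinking the prior magnitude $\rho$ to exploit the full adaptive gap at $s^*$ costs you in the constant of the two-point bound, not merely in logarithms. The paper closes the gap with Lemma~\ref{proposition:adaptation}, which is precisely the ``reexamined two-point construction'' you call for, but carried out with a free parameter $a\in[1/4,1/2)$: taking $\rho^2=(1/2-a)\log(1+d(\log d)/s^2)$ keeps the chi-square divergence below $d^{1/2-a}$, so any estimator improving on $\Phi^L(\s,s)$ by the factor $C_0(1/2-a)$ at some $s\ge d^a$ must have risk at $\theta=0$ (hence over $\T_{\lceil d^{r_1}\rceil}$, which contains $0$) of order $(1/2-a)\s^2 d^{3a-1/2}$. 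Choosing $a=(r_1+1/2)/2$ gives $3a-1/2=3r_1/2+1/4>2r_1$ for every $r_1<1/2$, so the product in \eqref{def3} diverges polynomially in $d$; one also checks, as in Corollary~\ref{cor1}, that \eqref{def2} forces the improvement point to satisfy $s^*\ge d^a$. This quantitative, $a$-dependent lower bound is the missing ingredient; Theorem~\ref{th:adaptation} is merely its instance $a=1/4$ and cannot cover the whole range $r_1<1/2$ on its own.
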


\begin{proof}[Proof of Proposition \ref{prp:adaptation_restricted}]
For $r_1\geq 1/2$,  it is proved in \cite{CollierCommingesTsybakov2015} that the simple estimator $\widehat{L}^*_d=\sum_{j=1}^d y_j$ simultaneously achieves the minimax risk $\psi^*_s\asymp \sigma^2 d$ for all $s= \lfloor \sqrt{d} \rfloor,\ldots,d$. As a consequence, there is no loss for adaptation to the classes $\{\T_s, d^{1/2} \leq s\leq d \}$.

Now assume that $r_1<1/2$. In view of Theorem  \ref{theorem_l0_upperbound}, the estimator $\widehat{L}$ simultaneously achieves  the rate $\Phi^L(\s,s)$ for all classes  $\{\T_s,d^{r_1} \leq s\leq d^{r_2} \}$. It suffices to prove that this rate is optimal.  Below, $\lceil x\rceil$ stands for the smallest integer greater than or equal to $x$.
\begin{proposition}\label{lem:adaptation_general}
Fix $r_1\in (1/4, 1/2)$
Let $d\ge  6$ and $\s>0$. There exist two 
absolute constants $C_0>0$ and $C_1>0$ such that the following holds.  Any estimator $\widehat{T}$ that satisfies \[\sup_{\theta\in \T_s} \mathbf{E}_\theta\big[\big(\widehat{T}- L(\theta) \big)^2 \big]\leq C_0(1/2-r_1)\Phi^L(\s,s) \quad \text{for some}\ s\geq d^{(1/2+r_1)/2} \]
has a degenerate maximal risk over $\T_{\lceil d^{r_1}\rceil}$, that is   
\[\sup_{\theta\in \T_{\lceil d^{r_1}\rceil}} \mathbf{E}_\theta\big[\big(\widehat{T}- L(\theta) \big)^2 \big]\geq C_1(1/2-r_1) \sigma^2 d^{3r_1/2+ 1/2}\ .\] 
\end{proposition}
Note that $\Phi^L(\s, \lceil d^{r_1}\rceil )$ is not of larger order than $\s^2 d^{2r_1}\log(d)$, which is much smaller than $d^{3r_1/2+1/2}$. The proof of  Proposition \ref{lem:adaptation_general} follows immediately by applying Lemma \ref{proposition:adaptation} with $a=(r_1+1/2)/2$ and then concluding the proof as in  Corollary \ref{cor1}.

\end{proof}

\section{Adaptation to $s$ when $\s$ is unknown}\label{sec:unknown_sigma}

In this section we discuss a generalization of our adaptive estimator to the case when the standard deviation $\s$ of the noise  is unknown. 

%First, we create two independent samples from the original sample $(y_1,\dots,y_d)$ using the ``sample cloning'' procedure. 
%This procedure is based on the following elementary fact. 
% Let $\omega_i$ be a standard normal random variable independent of $\xi_i$. Set
%$y_{i1}=y_i + \sigma \omega_i$, and 
%$y_{i2}=y_i - \sigma \omega_i$. 
%Then we have 
%$y_{i1}=\t_i + \xi_{i1},$ and 
%$y_{i2}=\t_i + \xi_{i2}$,
%where $ \xi_{i1}\sim \mathcal{N}(0,2\sigma^2)$,  $ \xi_{i2}\sim \mathcal{N}(0,2\sigma^2)$ and $ \xi_{i1}$ is independent of $ \xi_{i2}$.
%Thus, by adding to and subtracting from the observations $y_i$ the variables~$\sigma \omega_i$, we obtain two independent Gaussian $d$-samples $D_1=\{y_{i1}\}_{i=1}^d$ and $D_2=\{y_{i2}\}_{i=1}^d$, where $y_{ik}=\t_i + \xi_{ik}$, $k=1,2$. The observations in both samples are of the same form as in the original sample $\{y_i\}_{i=1}^d$, with the only difference that the variance of the noise is doubled. In this section, we will denote by $\prob_{\t}$ the joint distribution of samples $D_1$ and $D_2$, and by $\esp_{\t}$ the corresponding expectation. 

To treat the case of unknown $\s$, we first construct an estimator $\hat \s$ of $\s$ such that, with high probability, $\s \le \hat \s \le 10\s$. Then, we consider the family of estimators  defined by a relation analogous to \eqref{definition_estimators_linear}:
\begin{equation}\label{definition_estimators_linear_unknown_sigma}
\hat{L}'_s = \begin{cases} \sum_{j=1}^d y_{j} \fcar_{y_{j}^2>\a{\hat \s}^2\log(1+d(\log d)/s^2)}, &\text{ if } s\leq\sqrt{d \log d/2}, \\
\sum_{j=1}^d y_{j}, &\text{ otherwise}, \end{cases}
\end{equation}
where $\a>0$ is a constant to be chosen large enough. The difference from \eqref{definition_estimators_linear} consists in the fact that we replace the unknown $\s$ by $\hat \s$. Then, we define a random threshold $\o_s'>0$ as
$$
(\o_s')^2 = \b \hat{\s}^2 s^2 \log(1+d(\log d)/s^2),
$$
where $\b>0$ is a constant to be chosen large enough. 
The selected index $\hat{s}'$ is defined by the formula analogous to \eqref{definition_hats}:
\begin{equation}\label{definition_hats_unknown_sigma}
\hat{s}' \triangleq \min\big\{s\in \{1,\dots,\lfloor \sqrt{d \log d/2}\rfloor\}: \,  |\hat{L}'_s-\hat{L}'_{s'}|\le \o_{s'}' \ \text{for all} \ s'>s\big\}.
\end{equation}
Finally, the adaptive estimator when $\s$ is unknown is defined as
$$
\hat{L}'\triangleq \hat{L}'_{\hat s'}.
$$
The aim of this section is to show that the risk of the estimator $
\hat{L}'$ admits an upper bound with the same rate as in Theorem~\ref{theorem_l0_upperbound} for all $d$ large enough. Consequently, $
\hat{L}'$ attains the best rate of adaptive estimation as follows from Section \ref{sec:main}.

Different estimators $\hat{\s}$ can be used.
% To achieve the best rate of convergence, it is enough to have $\hat{\s}$ satisfying the  relation $\hat{\s}\asymp \s$ in some probabilistic sense. 
By slightly modifying the method suggested in  \cite{CollierCommingesTsybakov2015}, we consider 
the statistic
\begin{equation}\label{definition_hat_sigma}
\hat{\s}=9 \Big(\frac{1}{\lfloor d/2\rfloor} \sum_{j\le d/2} y_{(j)}^2\Big)^{1/2}
\end{equation}
where $y_{(1)}^2\le \dots \le y_{(d)}^2$ are the order statistics associated to $y_1^2,\dots,y_d^2$. 
This statistic has the properties stated in the next proposition. In particular, $\hat{\s}$ overestimates $\s$ but it turns out to be without  prejudice to 
the attainment of the best rate by the resulting estimator $\hat{L}_s'$.

\begin{proposition} 
\label{prop1}
There exists an absolute constant $d_0\ge 3$ such that the following holds. Let $\hat{\s}$ be the estimator defined in \eqref{definition_hat_sigma}. Then, for all integers $d\ge d_0$ and $s<d/2$ we have
\begin{equation}\label{eq1_prop1}
\inf_{\t\in \T_s}\prob_{\t}(
\s \le \hat \s \le 10\s) \ge 1-d^{-5},
\end{equation}
and 
\begin{equation}\label{eq2_prop1}
\sup_{\t\in \T_s}\esp_{\t}(
{\hat \s}^4) \le {\bar C}\s^4,
\end{equation}
where ${\bar C}$ is an absolute constant.
\end{proposition}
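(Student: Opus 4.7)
The plan is to reduce each of the two conclusions of the proposition to a Gaussian concentration inequality for a suitable $1$-Lipschitz function of the noise vector. Fix $\t\in\T_s$ with $s<d/2$, let $S=\{j:\t_j\neq 0\}$, and set $n=|S^c|\ge d-s$, $m=\lfloor d/2\rfloor$; since $s<d/2$ is strict and integer-valued, $n\ge m+1$. For every $j\in S^c$ one has $y_j=\s\xi_j$, so $\{\s^2\xi_j^2:j\in S^c\}$ is a sub-multiset of $\{y_j^2:j\in[d]\}$ of size at least $m$. Writing $\eta_{(1)}^2\le\cdots\le\eta_{(n)}^2$ for the order statistics of $\{\xi_j^2:j\in S^c\}$, the subset principle for order statistics gives $y_{(k)}^2\le\s^2\eta_{(k)}^2$ for every $k\le n$, and summing, $\sum_{k=1}^m y_{(k)}^2\le\s^2\sum_{k=1}^m\eta_{(k)}^2$; this will drive the upper bound $\hat\s\le 10\s$. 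For the lower bound, the dual fact that a noncentral $\chi_1^2$ stochastically dominates a central $\chi_1^2$ yields, via Strassen's theorem, a coupling with iid $\tilde\xi_1,\ldots,\tilde\xi_d\sim\calN(0,1)$ such that $y_j^2\ge\s^2\tilde\xi_j^2$ a.s.\ for every $j$. Order-preservation under sorting then yields $\sum_{k=1}^m y_{(k)}^2\ge\s^2\sum_{k=1}^m(\tilde\xi^2)_{(k)}$.

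For the upper direction, I would study the function $f(\xi):=\sqrt{\sum_{k=1}^m\eta_{(k)}^2}$ on $\RR^n$. The identity $f(\xi)=\min_{T\subset S^c,\,|T|=m}\|\xi_T\|_2$ (picking the $m$ smallest $\xi_j^2$ minimizes the subset sum) represents $f$ as a minimum of $1$-Lipschitz functions and is therefore itself $1$-Lipschitz. Furthermore, for any fixed $T$ with $|T|=m$ one has $f^2\le\sum_{j\in T}\xi_j^2\sim\chi_m^2$, and in particular $\esp f^2\le m$. The Gaussian concentration inequality gives $\prob(f\ge\sqrt m+t)\le e^{-t^2/2}$ for every $t\ge 0$; taking $t=\sqrt{10\log d}$ yields, with probability at least $1-d^{-5}$, the estimate $f^2\le m+2\sqrt{10m\log d}+10\log d$, and hence $\hat\s^2 = 81\s^2 f^2/m \le 100\s^2$ as soon as $d$ exceeds some absolute $d_0$. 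Symmetrically, set $\tilde f(\tilde\xi):=\sqrt{\sum_{k=1}^m(\tilde\xi^2)_{(k)}}$ on $\RR^d$, again $1$-Lipschitz by the same minimum representation. A direct computation shows $\esp\tilde f^2\ge c_0 d$ for an absolute constant $c_0>0$ (essentially the partial first moment of $\chi_1^2$ up to its median). Combining the variance bound $(\esp\tilde f)^2\ge\esp\tilde f^2-1$ with the lower-deviation form of Gaussian concentration gives $\tilde f^2\ge c_0 d/4$ with probability at least $1-d^{-5}$ for $d$ large, so $\hat\s^2\ge 81\s^2\cdot c_0d/(4m)\ge\s^2$; the leading constant $9$ is chosen precisely so this lower bound holds with slack. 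A union bound over the two concentration events then proves \eqref{eq1_prop1}.

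The fourth-moment bound \eqref{eq2_prop1} is obtained deterministically from the same order-statistic comparison: $\hat\s^4\le 81^2\s^4(\sum_{j\in S^c}\xi_j^2)^2/m^2$, and since $\sum_{j\in S^c}\xi_j^2\sim\chi_n^2$ satisfies $\esp[(\chi_n^2)^2] = n^2+2n\le d^2+2d$ while $m^2\ge d^2/5$ for $d$ moderately large, one concludes $\esp_\t[\hat\s^4]\le\bar C\s^4$ for an absolute $\bar C$. The technically delicate point I expect is obtaining the sharp estimate $\esp f^2\le m$ rather than the crude bound $\esp f^2\le\esp\sum_{j\in S^c}\xi_j^2 = n\le d\approx 2m$: plugging the looser bound into the concentration argument would only give $\hat\s^2\le 162\s^2$, i.e.\ $\hat\s\le 9\sqrt 2\,\s\approx 12.7\,\s$. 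It is precisely the factor of two difference between ``$m$ terms'' and ``$n$ terms'' that makes the interplay between the threshold $\lfloor d/2\rfloor$ and the leading constant $9$ yield the claimed $\hat\s\le 10\s$.
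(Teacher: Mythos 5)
Your argument is correct in structure, and it splits naturally into a part that coincides with the paper and a part that is genuinely different. The upper bound $\hat\s\le 10\s$ and the fourth-moment bound \eqref{eq2_prop1} are essentially the paper's proof: the paper fixes one subset $T\subset S^c$ of size $\lfloor d/2\rfloor$ and uses $\sum_{j\le d/2}y_{(j)}^2\le \s^2\sum_{i\in T}\xi_i^2$, then applies Gaussian concentration to $\|\xi_T\|_2$ and the $\chi^2$ second moment; your $\min_{T\subset S^c,|T|=m}\|\xi_T\|_2$ is the same device. (Incidentally, the point you flag as ``technically delicate'' is not: $\esp f^2\le m$ is immediate from $f^2\le\sum_{j\in T}\xi_j^2$ for any fixed $m$-subset $T$ of $S^c$, which is exactly what the paper writes.) Where you genuinely diverge is the lower bound $\hat\s\ge\s$. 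The paper takes a union bound over all $\binom{d}{\lfloor d/2\rfloor}$ subsets $G$, using stochastic domination of each $Z_G$ by a central $\chi^2_{d'}$ together with the sharp lower-tail bound $\prob(Z\le \tfrac{d'}{e}x^{2/d'})\le x$; the price of the union bound is that the surviving constant is $81/(4e^3)\approx 1.008$, barely above $1$. Your coordinate-wise Strassen coupling $y_j^2\ge\s^2\tilde\xi_j^2$ handles all subsets simultaneously, reduces everything to one Lipschitz concentration statement for $\tilde f$, and recovers the ``correct'' constant, namely $\esp\tilde f^2/d\to\esp[Z^2\fcar_{|Z|\le z}]\approx 0.071$ with $z^2$ the median of $\chi^2_1$ — so your route is cleaner and gives more slack than the paper's.

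The one loose end you must actually close is quantitative: your chain $\hat\s^2\ge 81\s^2 c_0 d/(4m)\ge\s^2$ requires $c_0>4m/(81d)\approx 2/81\approx 0.025$, so ``some absolute constant $c_0>0$'' is not enough — you need a non-asymptotic lower bound on $\esp\tilde f^2/d$ exceeding this threshold. This is true with room to spare (the limit is $\approx 0.071$, and a crude bound such as $\sum_{k\le m}(\tilde\xi^2)_{(k)}\ge\e\,(m-N_\e)$ with $N_\e=\#\{j:\tilde\xi_j^2\le\e\}$ and $\e$ chosen so that $\prob(\chi^2_1\le\e)\le 1/4$ already gives a workable constant), but it has to be written out, since the entire proposition hinges on the factor $9$ beating this number. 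Two trivial further fixes: take $t=\sqrt{12\log d}$ rather than $\sqrt{10\log d}$ so the union of the two bad events has probability at most $d^{-5}$ rather than $2d^{-5}$; and note that the lower-deviation inequality you invoke is for $\tilde f$ around its mean, so you should pass through $\esp\tilde f\ge\sqrt{\esp\tilde f^2-1}$ (Gaussian Poincar\'e), as you indicate.
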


The proof of this proposition is given in Section \ref{sec:upper}. Using Proposition \ref{prop1} we establish the following bound on the risk of the estimator $\hat{L}'$.

\begin{theorem}\label{th:unknown_sigma} Assume that $\a>48$, $\b\ge\frac{16}9(\sqrt{12}+2\sqrt{\a})^2$ and $d\ge d_0$ where $d_0\ge3$ is an absolute constant. Let $\hat{\s}$ be the estimator defined in \eqref{definition_hat_sigma}.   
Then, for the estimator $\hat{L}'$ with tuning parameters $\a$ and $\b$, for all $\s>0$, and all $s<d/2$ we have
\begin{equation}\label{eq:upp_unknown_sigma}
\sup_{\t\in \T_{s}} \esp_\t(\hat{L}' - L(\t))^2 \le C\Phi^L(\s,s)
\end{equation}
for some absolute constant $C$.
\end{theorem}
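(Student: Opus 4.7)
The plan is to split the risk according to the event $A=\{\sigma\le\hat\sigma\le 10\sigma\}$, on which Proposition~\ref{prop1} yields $\mathbf{P}_\theta(A^c)\le d^{-5}$ uniformly in $\theta\in\Theta_s$. On $A$, the data-driven thresholds of $\hat L'$ are trapped in deterministic ranges bracketing their known-$\sigma$ counterparts by a factor of at most $100$, so the analysis of Theorem~\ref{theorem_l0_upperbound} will apply with enlarged constants. On $A^c$, a rough fourth-moment bound combined with the small probability of $A^c$ will render this contribution negligible.

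For the contribution of $A^c$, I would apply Cauchy--Schwarz and then estimate the fourth moment using the decomposition $\hat L'-L(\theta)=\sigma\sum_j\xi_j\mathbf{1}_{E_j'}-\sum_j\theta_j\mathbf{1}_{(E_j')^c}$, where $E_j'$ is the event that $y_j$ is retained by $\hat L'$. On $(E_j')^c$, the estimate $|\theta_j|\le |y_j|+\sigma|\xi_j|\le\sqrt{\alpha}\,\hat\sigma\sqrt{\log(1+d\log d)}+\sigma|\xi_j|$ yields the crude pointwise bound
\begin{equation*}
|\hat L'-L(\theta)|\le 2\sigma\sum_{j=1}^d|\xi_j|+d\sqrt{\alpha\log(1+d\log d)}\,\hat\sigma.
\end{equation*}
Raising to the fourth power and applying $\mathbf{E}_\theta[\hat\sigma^4]\le\bar C\sigma^4$ from Proposition~\ref{prop1} produces $\mathbf{E}_\theta[(\hat L'-L(\theta))^4]=O(\sigma^4 d^4(\log d)^2)$. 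Combined with $\sqrt{\mathbf{P}_\theta(A^c)}\le d^{-5/2}$, this contribution is of order $\sigma^2(\log d)/\sqrt d$, which is much smaller than $\Phi^L(\sigma,s)\ge\sigma^2\log d$.

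For the contribution of $A$, I would mimic the proof of Theorem~\ref{theorem_l0_upperbound}, substituting $\hat\sigma$ for $\sigma$ in $\hat L'_s$ and $\omega_s'$, and exploiting the deterministic bracketings $\alpha\sigma^2\log(1+d\log d/s^2)\le \alpha\hat\sigma^2\log(1+d\log d/s^2)\le 100\alpha\sigma^2\log(1+d\log d/s^2)$ and $\omega_s\le\omega_s'\le 10\omega_s$ valid on $A$. The lower bound $\hat\sigma\ge\sigma$ preserves the Gaussian tail estimates controlling the number of spuriously retained coordinates (and hence the variance of $\hat L'_s$), while the upper bound $\hat\sigma\le 10\sigma$ bounds the extra bias from overestimating $\sigma$; the net effect is to scale both contributions by at most a bounded universal factor. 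The tuning conditions $\alpha>48$ and $\beta\ge\tfrac{16}{9}(\sqrt{12}+2\sqrt\alpha)^2$ of Theorem~\ref{theorem_l0_upperbound} are calibrated to remain sufficient after this factor-of-$10$ blow-up, so the Lepski oracle inequality carries over at the cost of an enlarged universal constant. The main obstacle is that $\hat\sigma$ is not independent of the $y_j$, so one cannot simply condition on $\hat\sigma$ and invoke Theorem~\ref{theorem_l0_upperbound}; however, since the bounds on $\hat\sigma$ hold \emph{uniformly} on $A$, the Gaussian concentration inequalities inside the proof of Theorem~\ref{theorem_l0_upperbound} can be applied with the deterministic upper bound $100\alpha\sigma^2\log(1+d\log d/s^2)$ when an upper bound on the threshold is needed, and with $\alpha\sigma^2\log(1+d\log d/s^2)$ when a lower bound is needed, severing the problematic dependence at the probabilistic step.
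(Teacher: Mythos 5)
Your overall architecture is the same as the paper's: split on the event $A=\{\s\le\hat\s\le 10\s\}$, use Cauchy--Schwarz together with a fourth-moment bound and $\prob_\t(A^c)\le d^{-5}$ on the complement (this is exactly how the paper handles the bad event in Lemma~\ref{lem:risk}, and your fourth-moment computation via $\esp_\t(\hat\s^4)\le\bar C\s^4$ reproduces the second half of Lemma~\ref{lemma_power4}), and rerun the Lepski analysis of Theorem~\ref{theorem_l0_upperbound} on $A$ using the bracketing $\o_{s}\le\o'_{s}\le 10\,\o_{s}$.

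There is, however, a genuine gap in how you sever the dependence between $\hat\s$ and the data on the good event. The bracketing of the threshold works for the on-support bias term, where each retained $|y_i|\le\sqrt{\a}\,\hat\s a\le 10\sqrt{\a}\,\s a$ gives a deterministic pointwise bound. It does \emph{not} work for the off-support term $\s\big|\sum_{i\notin S}\xi_i\fcar_{\xi_i^2>\a\hat\s^2a^2/\s^2}\big|$: the event $\{|\sum_{i\notin S}\xi_i\fcar_{|\xi_i|>\sqrt{\a}\,\hat\s a/\s}|>v\}\cap A$ is contained neither in $\{|\sum_{i\notin S}\xi_i\fcar_{|\xi_i|>\sqrt{\a}a}|>v\}$ nor in the analogous event with threshold $10\sqrt{\a}a$, because the summands are signed and changing the threshold changes which signed terms enter the sum. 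Replacing the random threshold by its deterministic lower bound only dominates $\sum_{i\notin S}|\xi_i|\fcar_{|\xi_i|>\sqrt{\a}a}$, and that absolute-value bound is too lossy: its mean is of order $d\sqrt{\a}\,a\,e^{-\a a^2/2}=d\sqrt{\a}\,a\,(1+d(\log d)/s^2)^{-\a/2}$, which for $s$ near $\sqrt{d\log d/2}$ is of order $d$ and exceeds the target $\sqrt{\a}\,sa\asymp\sqrt{\Phi^L(\s,s)}/\s$ for large $d$; the sign cancellation is essential there. What is actually needed is a deviation bound that is \emph{uniform over the threshold range}, i.e.\ a bound on $\prob\big(\sup_{t\in[1,10]}|\sum_{i\notin S}\xi_i\fcar_{|\xi_i|>\sqrt{\a}at}|>\sqrt{\a}sa\big)$. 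This is precisely the content of the paper's Lemma~\ref{lemma_truncated_gaussian}, whose $\sup_{t\in[1,10]}$ formulation exists for this purpose and whose proof requires a separate argument (conditioning on the moduli $|\xi_i|$, a bounded-differences inequality in the Rademacher signs, and a Fuk--Nagaev bound on $\sum_i\xi_i^2\fcar_{|\xi_i|>\sqrt{\a}a}$). Your proposal, as stated, does not supply this uniform control, so the step ``apply the concentration with the deterministic lower bound on the threshold'' fails for this term.
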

Thus, the estimator $\hat{L}'$, which is independent of both $s$ and $\s$ achieves the rate $\Phi^L(\s,s)$ that is the best possible rate of adaptive estimation established in Section \ref{sec:main}.

The condition $s< d/2$ in this theorem can be generalized to $s\leq \zeta d$ for some $\zeta \in (0,1)$. In fact, for any $\zeta \in (0,1)$, we can modify the definition of  \eqref{definition_hat_sigma} by summing only over the $(1-\zeta)d$ smallest values of $y_i^2$. Then, changing the numerical constants $\alpha$ and $\beta$ in the definition of $\omega'_s$, we obtain that the corresponding estimator $\hat{L}'$ achieves the best possible rate simultaneously for all $s\leq \zeta d$ with a constant $C$ in \eqref{eq:upp_unknown_sigma} that would depend on $\zeta$. However, we cannot set $\zeta=1$.
Indeed,  the following proposition shows that it is not possible to construct an estimator, which is simultaneously adaptive to all $\sigma>0$ and to all $s\in [1,d]$. 

\begin{proposition}\label{th:adaptation_sigma}
Let $d\ge 3$ and $\s>0$. There exists a small absolute constant $C_0>0$ such that the following holds. 
Any estimator $\widehat{T}$ that satisfies 
\begin{equation}\label{eq:T1_sigma_adaptative}
\sup_{\theta\in \T_1} \mathbf{E}_\theta\big[\big(\widehat{T}- L(\theta) \big)^2 \big]\leq C_0 \sigma^2 d \ ,\quad \quad \forall \s>0,
\end{equation}
has a degenerate maximal risk over $\T_d$, that is,  for any fixed  $\sigma>0$, 
\begin{equation}\label{eq:T2_sigma_adaptative}
\sup_{\theta\in \T_d}   \mathbf{E}_\theta\big[\big(\widehat{T}- L(\theta) \big)^2 \big]=\infty \ . 
\end{equation}

\end{proposition}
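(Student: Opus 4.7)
The plan is a direct Bayes contradiction argument leveraging a Gaussian prior on $\Theta_d=\RR^d$ whose $y$-marginal matches that of a pure-noise experiment with inflated variance.

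Fix $\sigma_0>0$ and assume, for contradiction, that $M:=\sup_{\theta\in\Theta_d}\esp_{\theta}[(\widehat T-L(\theta))^2]$ (taken at noise level $\sigma_0$) is finite. For a parameter $A>0$ to be let to $+\infty$, introduce the Gaussian prior $\pi_A=\mathcal N(0,(A^2/d)I_d)$ on $\Theta_d$. Under $\pi_A$ one has $\esp_{\pi_A}[L(\theta)^2]=\var(\sum_i\theta_i)=A^2$, and the marginal of $y=\theta+\sigma_0\xi$ is $\mathcal N(0,(\sigma')^2 I_d)$ with $(\sigma')^2=A^2/d+\sigma_0^2$; that is, the marginal of $y$ under $\pi_A$ coincides with $\prob_{0,\sigma'}$.

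I would then squeeze the Bayes risk $\bar R_A:=\esp_{\theta\sim\pi_A}\esp_{\theta}[(\widehat T-L(\theta))^2]$ from both sides. From above, $\bar R_A\le M$ because an average of risks is bounded by the supremum of risks. From below, expanding the square gives
\[
\bar R_A=\esp_{\pi_A}[\widehat T(y)^2]-2\esp_{\pi_A}[\widehat T(y)\,L(\theta)]+\esp_{\pi_A}[L(\theta)^2].
\]
The marginal matching yields $\esp_{\pi_A}[\widehat T(y)^2]=\esp_{0,\sigma'}[\widehat T^2]$, and applying the assumption \eqref{eq:T1_sigma_adaptative} at $\theta=0\in\Theta_1$ with noise level $\sigma'$ gives $\esp_{\pi_A}[\widehat T^2]\le C_0(\sigma')^2 d=C_0(A^2+\sigma_0^2 d)$. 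Together with $\esp_{\pi_A}[L^2]=A^2$ and Cauchy--Schwarz on the cross term, $|\esp_{\pi_A}[\widehat T L]|\le A\sqrt{C_0(A^2+\sigma_0^2 d)}$, this produces
\[
\bar R_A\ \ge\ A^2-2A\sqrt{C_0(A^2+\sigma_0^2 d)}\ \ge\ (1-2\sqrt{C_0})A^2-2\sqrt{C_0}\,\sigma_0\sqrt{d}\,A.
\]
If $C_0$ is a fixed absolute constant strictly less than $1/4$, the coefficient of $A^2$ is strictly positive and $\bar R_A\to+\infty$ as $A\to\infty$, contradicting $\bar R_A\le M$; hence $M=\infty$, which is exactly \eqref{eq:T2_sigma_adaptative}.

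The crux is the marginal-matching construction, which is what makes the \emph{``for all $\sigma>0$''} part of \eqref{eq:T1_sigma_adaptative} decisive: by choosing the prior variance so that integrating over $\theta$ produces the same $y$-law as a pure-noise experiment at the larger variance $(\sigma')^2=A^2/d+\sigma_0^2$, a single assumption at $\theta=0\in\Theta_1$ controls $\esp_{\pi_A}[\widehat T^2]$ uniformly in $A$. Once this is in place the blow-up is a one-line Cauchy--Schwarz computation showing that the prior variance $A^2$ of $L(\theta)$ eventually dwarfs the permitted $L^2$-size of $\widehat T$. No regularity, symmetry, or structural property of $\widehat T$ beyond measurability is needed.
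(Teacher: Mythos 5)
Your proof is correct and rests on the same central idea as the paper's: a Gaussian prior on $\Theta_d$ whose $y$-marginal coincides with the pure-noise law $\mathbf{P}_{0,\sigma'}$ at the inflated variance $(\sigma')^2=A^2/d+\sigma_0^2$, so that the hypothesis \eqref{eq:T1_sigma_adaptative} applied at $\theta=0\in\T_1$ with that larger noise level controls $\widehat{T}$ uniformly in $A$. The only difference is in the finishing step --- the paper separates the events $\{|\widehat{T}|<a\sqrt{d}/8\}$ and $\{|L(\theta)|>a\sqrt{d}/4\}$ via Markov's inequality and a Gaussian tail bound, whereas you expand the Bayes risk and apply Cauchy--Schwarz --- and both correctly yield the blow-up as the prior variance tends to infinity.
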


In other words, when $\sigma$ is unknown, any estimator, for which the maximal risk over $\T_d$ is finite for all $\s$, cannot achieve over $\T_1$ a risk of smaller order than $\sigma^2d$, and hence cannot be minimax adaptive. Indeed, as shown above, the adaptive minimax rate over $\Theta_1$ is of the order $\sigma^2\log d$.

\section{Proofs of the upper bounds}\label{sec:upper}

In the following, we will denote $c_1,c_2,\ldots$ absolute positive constants and write for brevity $L$ instead of $L(\t)$. 

\subsection{Proof of Theorem~\ref{theorem_l0_upperbound}}

Let $s\in \{1,\dots,d\}$ and assume that $\t$ belongs to $\Theta_s$.  We have
\begin{equation}\label{decomposition_upperbound_linear_1}
\esp_\t (\hat{L}-L)^2 = \esp_\t \big[ (\hat{L}_{\hat{s}}-L)^2 \fcar_{\hat{s}\le s} \big] +  \esp_\t \big[ (\hat{L}_{\hat{s}}-L)^2 \fcar_{\hat{s}>s} \big].
\end{equation}
Consider the first summand on the right hand side of~(\ref{decomposition_upperbound_linear_1}). Set for brevity $s_0=\lfloor\sqrt{d \log d/2}\rfloor+1$.
Using the definition of $\hat{s}$ we obtain, on the event $\{\hat{s}\le s\}$,
\begin{equation*}
(\hat{L}_{\hat{s}}-L)^2 \le 2 \o^2_{s} + 2(\hat{L}_{s}-L)^2 \ \text{if} \ s<s_0 \ \text{or} \ s\ge s_0, \hat{s}<s_0.
\end{equation*}
Thus,
\begin{eqnarray}\label{proof_upper1}
\forall \ s< s_0: \quad \esp_\t \big[ (\hat{L}_{\hat{s}}-L)^2 \fcar_{\hat{s}\le s} \big] &\le& 2\beta \Phi^L(\s,s) + 2\esp_\t (\hat{L}_{s}-L)^2 ,
\\  \label{proof_upper1_bis}
\forall \ s \ge  s_0: \quad   \esp_\t \big[ (\hat{L}_{\hat{s}}-L)^2 \fcar_{\hat{s}\le s} \big] &\le& 
\esp_\t \big[ (\hat{L}_{\hat{s}}-L)^2 (\fcar_{\hat{s}\le s, \hat{s}<s_0} + \fcar_{\hat{s}=s_0} )\big] 
\\
&\le&
2\beta \Phi^L(\s,s) + 2\esp_\t (\hat{L}_{s}-L)^2 + \esp_\t (\hat{L}_{s_0}-L)^2. \nonumber
\end{eqnarray}
By Lemma \ref{lem:risk} proved at the end of this section, we have
\begin{equation*}
\sup_{\t\in \Theta_s} \esp_\t (\hat{L}_{s}-L)^2 \le c_1\Phi^L(\s,s), \quad s=1,\dots,s_0 -1.
\end{equation*}
Note that, in view of \eqref{dlogd0}, for all $s\in[s_0,d]$  we have 
$$
\Phi^L(\s,s_0) \le \s^2d \log d\le 2\s^2s^2\log(1+(d\log d)/s^2)= 2 \Phi^L(\s,s),
$$ 
and by definition of $\hat{L}_{s}$, for all $s\in[s_0,d]$ and all $\t\in \RR^d$, we have $\esp_\t (\hat{L}_{s}-L)^2\le\s^2d\le 2 \Phi^L(\s,s)$. 
Combining these remarks with \eqref{proof_upper1} and \eqref{proof_upper1_bis} yields
\begin{equation}\label{proof_upper2}
\sup_{\t\in \Theta_s} \esp_\t \big[ (\hat{L}_{\hat{s}}-L)^2 \fcar_{\hat{s}\le s} \big] \le c_2 \Phi^L(\s,s), \quad s=1,\dots,d.
\end{equation}
Consider now the second summand on the right hand side of~(\ref{decomposition_upperbound_linear_1}). Since $\hat{s}  \leq s_0$ we obtain the following two facts. First,
\begin{equation}\label{proof_upper2a}
\sup_{\t\in \Theta_s}\esp_\t \big[ (\hat{L}_{\hat{s}}-L)^2 \fcar_{\hat{s}>s} \big] =0, \quad \forall \ s\ge  s_0.
\end{equation}
Second, on the event $\{\hat{s}>s\}$,
\begin{equation*}
(\hat{L}_{\hat{s}}-L)^4 \le \sum_{s<s'\leq s_0} (\hat{L}_{s'}-L)^4.
\end{equation*}
Thus,
\begin{eqnarray}\nonumber
\sup_{\t\in \Theta_s}\esp_\t \big[ (\hat{L}_{\hat{s}}-L)^2 \fcar_{\hat{s}>s} \big] &\le& \sup_{\t\in \Theta_s} \Big[\sqrt{\prob_\t(\hat{s}>s)} (d \log d)^{1/4}\max_{s<s'\leq s_0}\sqrt{\esp_\t(\hat{L}_{s'}-L)^4}\Big]\\
&\le& (d \log d)^{1/4} \sup_{\t\in \Theta_s} \sqrt{\prob_\t(\hat{s}>s)} \max_{s'\leq s_0} \Big[\sup_{\t\in \Theta_{s'}} \sqrt{\esp_\t(\hat{L}_{s'}-L)^4}\Big]%\nonumber\\
%&\le& (d\log d)^{1/4} \sup_{\t\in \Theta_s} \sqrt{\prob_\t(\hat{s}>s)} \max_{s<\sqrt{d \log d}} \Big[\sup_{\t\in \Theta_{s}} \sqrt{\esp_\t(\hat{L}_{s}-L)^4}\Big] 
\label{inequality_upperbound_1}
\end{eqnarray}
where for the second inequality we have used that $\Theta_{s}\subset \Theta_{s'}$ for $s<s'$.
To evaluate the right hand side of \eqref{inequality_upperbound_1} we use the following two lemmas.
\begin{lemma}\label{lemma_power4}
Recall the definitions of $\hat{L}_s$ and $\hat{L}'_s$ in~(\ref{definition_estimators_linear}) and~(\ref{definition_estimators_linear_unknown_sigma}). For all $s\leq s_0=\lfloor\sqrt{d \log d/2}\rfloor+1$, we have
\begin{align*}
&\sup_{\t\in\T_{s}} \esp_\t\big(\hat{L}_{s}-L\big)^4 \le c_3 \s^4 d^4 (\log d)^2, \\
&\sup_{\t\in\T_{s}} \esp_\t\big(\hat{L}_{s}'-L\big)^4 \le c_4 \s^4 d^4 (\log d)^2.
\end{align*}
\end{lemma}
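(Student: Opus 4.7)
The plan is to decompose $\hat L_s - L$ along the support $S=\{j:\t_j\neq 0\}$ (with $|S|\le s$) as
\[
\hat L_s-L \;=\; \underbrace{\sum_{j\in S}\bigl(y_j\fcar_{y_j^2>\tau_s^2}-\t_j\bigr)}_{=:A} \;+\; \underbrace{\sum_{j\notin S}\s\xi_j\fcar_{\s^2\xi_j^2>\tau_s^2}}_{=:B},
\]
where $\tau_s^2=\a\s^2\log(1+d(\log d)/s^2)$, and then to bound $\esp A^4$ and $\esp B^4$ separately. The boundary case $s=s_0$, for which $\hat L_s=\sum_j y_j$ and $\hat L_s-L=\s\sum\xi_j\sim\calN(0,\s^2d)$, is immediate.

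For $A$, the crucial observation is that $\{y_j^2\le\tau_s^2\}$ forces $|\t_j|\le\tau_s+\s|\xi_j|$, so each summand is bounded in absolute value by $\tau_s+2\s|\xi_j|$. Using Jensen's inequality $\esp\bigl(\sum_{j\in S}|\xi_j|\bigr)^4\le s^4\esp\xi_1^4$, one obtains $\esp A^4\le c s^4\tau_s^4+c s^4\s^4$. Since $\tau_s^2\le c\s^2\log d$ and $s^2\le s_0^2\le d\log d$, this yields $\esp A^4\le c\s^4 d^2(\log d)^4$, which is dominated by $\s^4 d^4(\log d)^2$.

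For $B$, the summands $Z_j=\s\xi_j\fcar_{\s^2\xi_j^2>\tau_s^2}$ are i.i.d.\ and centered by symmetry, so
\[
\esp B^4\le C\bigl(d\,\esp Z_1^4+d^2(\esp Z_1^2)^2\bigr).
\]
Standard Gaussian truncated-moment bounds give $\esp Z_1^2\le c\s^2\sqrt{\eta}\,e^{-\eta/2}$ and $\esp Z_1^4\le c\s^4\eta^{3/2}e^{-\eta/2}$ with $\eta=\tau_s^2/\s^2=\a\log(1+d(\log d)/s^2)$. Since $e^{-\eta/2}=(1+d(\log d)/s^2)^{-\a/2}\le 3^{-\a/2}$ for $s\le\sqrt{d\log d/2}$ and $\a>48$, this contribution is far below $\s^4 d^4(\log d)^2$.

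The bound for $\hat L'_s$ follows by repeating the decomposition with the random threshold $\hat\tau_s^2=\a\hat\s^2\log(1+d(\log d)/s^2)$ and splitting according to the event $\O=\{\s\le\hat\s\le 10\s\}$ of Proposition~\ref{prop1}. On $\O$, the arguments above go through with $10\s$ in place of $\s$; for the complement sum one uses the inclusion $\{|\xi_j|>\hat\tau_s/\s\}\subseteq\{|\xi_j|>\tau_s/\s\}$ to reduce the control of $|B|$ to a non-negative truncated sum whose fourth moment obeys $\esp\bigl(\sum_{j\notin S}\s|\xi_j|\fcar_{\sigma^2\xi_j^2>\tau_s^2}\bigr)^4\le d^4\esp W_1^4\le c\s^4d^4$ by Jensen. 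On $\O^c$ one applies Cauchy--Schwarz,
\[
\esp\bigl[(\hat L'_s-L)^4\fcar_{\O^c}\bigr]\le \sqrt{\esp(\hat L'_s-L)^8}\,\sqrt{\prob(\O^c)},
\]
together with $\prob(\O^c)\le d^{-5}$ and a crude polynomial-in-$d$ bound on the eighth moment of $\hat L'_s-L$ (obtained by the same pointwise decomposition, and by bounding $\esp\hat\s^8$ by a slight extension of the proof of Proposition~\ref{prop1}). The main technical obstacle is precisely the handling of the data-dependent threshold $\hat\tau_s$, which destroys the independence exploited for $B$ in the known-$\s$ case; the sandwich $\s\le\hat\s\le 10\s$ combined with the split $\O/\O^c$ is the key device that allows the known-$\s$ argument to be recycled.
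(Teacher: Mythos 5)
Your proof is correct, but it takes a genuinely different and considerably heavier route than the paper's. The paper simply writes $\hat{L}_s-L=\s\sum_i\xi_i-\sum_i y_i\fcar_{y_i^2\le \a\s^2 a^2}$ with $a^2=\log(1+d(\log d)/s^2)$, bounds the second sum \emph{deterministically} by $d\sqrt{\a}\,\s a$ (each retained term is at most the threshold in absolute value), and gets $\esp_\t(\hat{L}_s-L)^4\le 8\big(3\s^4d^2+d^4\a^2\s^4a^4\big)$ in one line; for $\hat{L}'_s$ the same pointwise bound produces $d^4\a^2\hat\s^4a^4$, and one concludes directly from $\esp_\t\hat\s^4\le\bar C\s^4$ (Proposition~\ref{prop1}), with no need for any high-probability event. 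Your support decomposition $A+B$ with Rosenthal-type moment bounds for the truncated Gaussians in $B$ is valid and actually yields the sharper order $\s^4d^2(\log d)^4$ in the known-$\s$ case, but this sharpness is not needed here (the lemma is only used against a $\sqrt{\prob(\hat s>s)}\lesssim d^{-5/2}$ factor, so the crude $d^4(\log d)^2$ suffices). More importantly, your treatment of $\hat{L}'_s$ via the split on $\O=\{\s\le\hat\s\le10\s\}$ is redundant: the ``crude polynomial eighth-moment bound'' you invoke on $\O^c$ is itself obtained from the pointwise decomposition together with $\esp\hat\s^8\lesssim\s^8$, and by Jensen that same decomposition applied at the fourth power with $\esp\hat\s^4\lesssim\s^4$ gives the claim outright --- which is exactly the paper's argument. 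So nothing is wrong, but the event split and the independence discussion for the data-dependent threshold solve a difficulty that the lemma does not actually require you to face (it matters in Lemmas~\ref{lemma_probability} and~\ref{lem:risk}, where the paper does use the sandwich $\s\le\hat\s\le10\s$ and Lemma~\ref{lemma_truncated_gaussian}).
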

\begin{lemma}\label{lemma_probability}
Assume that $\a>48$ and $\b= \frac{16}9(\sqrt{12}+2\sqrt{\a})^2$. \\
(i) We have
\begin{equation}\label{eq:lem2_1}
\max_{s\leq \sqrt{d \log d/2}} \ \sup_{\t\in\T_s} \prob_\t(\hat{s}>s) \le c_5 d^{-5}.
\end{equation}
(ii) We have
\begin{equation*}
\max_{s\leq \sqrt{d \log d/2}} \ \sup_{\t\in\T_s} \prob_\t(\hat{s}'>s) \le c_6 d^{-5}.
\end{equation*}
\end{lemma}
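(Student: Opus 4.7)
}
The plan is to reduce both statements to a single concentration estimate on $|\hat L_{s''} - L|$. Fix $s\le \sqrt{d\log d/2}$ and $\t\in\T_s$. By definition of $\hat s$ in~\eqref{definition_hats}, the event $\{\hat s>s\}$ forces $s$ itself to violate the admissibility condition, so there exists $s'>s$ with $|\hat L_s-\hat L_{s'}|>\o_{s'}$. Because $\hat L_{s'}=\sum_j y_j$ for every $s'\ge s_0$ and the map $s'\mapsto\o_{s'}$ is nondecreasing (the function $x\log(1+a/x)$ is increasing), the union bound can be restricted to $s'\in\{s+1,\dots,s_0\}$:
\[
\prob_\t(\hat s>s)\le\sum_{s'=s+1}^{s_0}\prob_\t\big(|\hat L_s-L|+|\hat L_{s'}-L|>\o_{s'}\big)\le\sum_{s'=s+1}^{s_0}\Big[\prob_\t\big(|\hat L_s-L|>\tfrac12\o_s\big)+\prob_\t\big(|\hat L_{s'}-L|>\tfrac12\o_{s'}\big)\Big],
\]
where I used $\o_s\le\o_{s'}$ in the split $\o_{s'}\ge\tfrac12(\o_s+\o_{s'})$. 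Note that for $s'>s$ we have $\t\in\T_{s'}$, so everything reduces to the claim: for every $s''\in\{1,\dots,d\}$ and every $\t\in\T_{s''}$,
\[
\prob_\t\big(|\hat L_{s''}-L|>\tfrac12\o_{s''}\big)\le C\,d^{-6}.
\]
A union bound over at most $d$ indices then yields~\eqref{eq:lem2_1}.

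The core claim splits into two regimes. If $s''\ge s_0$, then $\hat L_{s''}-L=\s\sum_j\xi_j\sim\mathcal N(0,\s^2 d)$ and, by~\eqref{dlogd}, $\o_{s''}^2\ge\o_{s_0}^2\ge\tfrac\b2\s^2 d\log d$; the standard Gaussian tail gives $\prob(|\mathcal N(0,1)|>\sqrt{\b\log d/8})\le 2d^{-\b/16}$, which is $\le d^{-6}$ for the prescribed $\b$. If $s''<s_0$, denote by $S$ the support of $\t$ ($|S|\le s''$), let $t_{s''}=\a\s^2\log(1+d\log d/(s'')^2)$, and decompose
\[
\hat L_{s''}-L=\underbrace{\sum_{j\in S}\big(y_j\fcar_{y_j^2>t_{s''}}-\t_j\big)}_{E_1}+\underbrace{\sum_{j\notin S}\s\xi_j\fcar_{\s^2\xi_j^2>t_{s''}}}_{E_2}.
\]
Each summand of $E_1$ is at most $\sqrt{t_{s''}}+\s|\xi_j|$ in absolute value (consider the cases $y_j^2>t_{s''}$ and $y_j^2\le t_{s''}$ separately, and use $|\t_j|\le|y_j|+\s|\xi_j|$). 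Since $s''\sqrt{t_{s''}}=\sqrt{\a/\b}\,\o_{s''}$, and Gaussian Lipschitz concentration applied to $(\xi_j)_{j\in S}\mapsto\sum_{j\in S}|\xi_j|$ gives $\sum_{j\in S}|\xi_j|\le s''\sqrt{2/\pi}+\sqrt{12\log d}$ with probability at least $1-d^{-6}$, one obtains
\[
|E_1|\le\sqrt{\a/\b}\,\o_{s''}+\s\,s''\sqrt{2/\pi}+\s\sqrt{12\log d}
\]
on this event; because $\o_{s''}^2\ge\b\s^2(s''){}^2\log 3$ and $(s'')^2\log(1+d\log d/(s'')^2)\ge\log d$ (the function $x\mapsto x\log(1+a/x)$ being increasing), the last two terms are $\le\sqrt{12/\b}\,\o_{s''}$ plus a negligible contribution.

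The main technical step is controlling $E_2$, a sum of $d-|S|$ i.i.d.\ symmetric mean-zero variables $\eta_j=\xi_j\fcar_{\xi_j^2>t_{s''}/\s^2}$ with $\esp\eta_j^2\le C\sqrt{t_{s''}/\s^2}\,\exp(-t_{s''}/(2\s^2))$. Conditioning on the high-probability event $\{\max_j|\xi_j|\le\sqrt{20\log d}\}$ (which fails with probability $\le 2d^{-9}$), the $\eta_j$ are bounded, and Bernstein's inequality applies: the exponentially small variance $\var(E_2)\le C\s^2 d\sqrt{\log u}\,u^{-\a/2}$ with $u=1+d\log d/(s'')^2\ge 3$, together with $\a>48$, yields $\prob(|E_2|>\tfrac1{16}\o_{s''})\le C'd^{-6}$ uniformly over $s''\le s_0$. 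Combining,
\[
|\hat L_{s''}-L|\le\Big(\sqrt{\a/\b}+\sqrt{12/\b}\Big)\o_{s''}+o(\o_{s''})=\frac{\sqrt{\a}+\sqrt{12}}{\sqrt{\b}}\,\o_{s''}+o(\o_{s''}),
\]
and the condition $\sqrt{\b}\ge\frac{4}{3}(\sqrt{12}+2\sqrt{\a})$ combined with $\a>12$ gives $(\sqrt{\a}+\sqrt{12})/\sqrt{\b}<1/2$, finishing the proof of~(i).

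For part~(ii), I would condition on the event $\mathcal E=\{\s\le\hat\s\le 10\s\}$, which by Proposition~\ref{prop1} has probability at least $1-d^{-5}$ uniformly for $\t\in\T_s$ with $s<d/2$. On $\mathcal E$ the random threshold $\a\hat\s^2\log(\cdot)$ and the cutoff $\o'_{s'}$ are sandwiched between fixed multiples of their non-random analogues used in part~(i), so the same decomposition and deviation estimates apply with $\s$ replaced by $\hat\s$, the inflation being absorbed into the absolute constants. The main obstacle throughout is the Bernstein bound on $E_2$: it is the step where the tuning conditions $\a>48$ (to make the off-support variance summable in a Bernstein sense) and the specific form of $\b$ (to close the inequality $(\sqrt{\a}+\sqrt{12})/\sqrt{\b}<1/2$) are both used, and it is also the step that forces the additional logarithmic inflation $d(\log d)$ inside $\o_{s''}$ rather than just $d$.
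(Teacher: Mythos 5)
Your overall architecture matches the paper's: the union bound over $s'>s$, the triangle-inequality split of $|\hat L_s-\hat L_{s'}|$ using the monotonicity of $s'\mapsto\o_{s'}$, and the decomposition of $\hat L_{s''}-L$ into an on-support part $E_1$ and an off-support truncated sum $E_2$ (the paper keeps the signed Gaussian sum $\s\sum_{i\in S}\xi_i$ plus the deterministic bound $\sqrt\a\s s a$ instead of your $\sum_{j\in S}|\xi_j|$ with Lipschitz concentration, but both routes close). The gap is in what you yourself identify as the main technical step: Bernstein's inequality does \emph{not} yield $\prob(|E_2|>\o_{s''}/16)\le Cd^{-6}$. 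Whenever a summand $\eta_j=\xi_j\fcar_{|\xi_j|>\sqrt\a a}$ is nonzero, it already has magnitude at least $\sqrt\a a$, while your target threshold is of order $\sqrt\b\, s''a$; for small $s''$ (say $s''=1$, where $a\asymp\sqrt{\log d}$) a single nonzero summand exceeds the threshold. Quantitatively, with your truncation $M=\sqrt{20\log d}$ and $v\asymp\sqrt\b\, s''a$, the sub-exponential regime of Bernstein gives $\exp(-3v/(2M))=\exp(-O(s''))$, a constant when $s''=1$, no matter how exponentially small the variance is. The desired bound is nevertheless true, but the mechanism is that the \emph{number} of nonzero summands is essentially Poisson with mean $\approx d\,e^{-\a a^2/2}= d\,(1+d\log d/(s'')^2)^{-\a/2}$, which for $\a>48$ is polynomially small in $d$; capturing this requires the polynomial term $\sum_i\esp|X_i|^p v^{-p}$ of the Fuk--Nagaev inequality (Lemma~\ref{lemma:fuk}, used by the paper inside Lemma~\ref{lemma_truncated_gaussian}), or Bennett's inequality, or a direct binomial count of the exceedances --- but not the two-regime Bernstein bound.

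A second, smaller issue concerns part (ii). On the event $\{\hat\s\in[\s,10\s]\}$ the truncation level $\sqrt\a\,\hat\s a/\s$ is a random point of $[\sqrt\a a,10\sqrt\a a]$ that depends on the very same $\xi_j$'s through $\hat\s$, so you cannot simply ``replace $\s$ by $\hat\s$'' and invoke a fixed-threshold deviation bound for $E_2$. One needs a bound uniform over the truncation level $t\in[1,10]$, which is exactly why the paper's Lemma~\ref{lemma_truncated_gaussian} controls $\sup_{t\in[1,10]}\big|\sum_{i\in U}\xi_i\fcar_{|\xi_i|>\sqrt\a a t}\big|$ (via bounded differences over the Rademacher signs conditionally on the moduli $|\xi_i|$, followed by Fuk--Nagaev applied to $\sum_i\xi_i^2\fcar_{|\xi_i|>\sqrt\a a}$). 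Your $E_1$ estimate also has a harmless slip: the Gaussian concentration deviation for $\sum_{j\in S}|\xi_j|$ should be $\sqrt{12\,s''\log d}$, not $\sqrt{12\log d}$; it is still absorbed into $\sqrt{12/\b}\,\o_{s''}$ via $s''a^2\ge\log d$, so the final constant $(\sqrt{12}+2\sqrt\a)/\sqrt\b$ is unaffected.
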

From~\eqref{proof_upper2a}, (\ref{inequality_upperbound_1}), the first inequality in Lemma \ref{lemma_power4}, and part (i) of Lemma~\ref{lemma_probability}  
 we find that
\begin{equation*}\label{inequality_upperbound_2}
\sup_{\t\in \Theta_s}\esp_\t \big[ (\hat{L}_{\hat{s}}-L)^2 \fcar_{\hat{s}>s} \big] \le \sqrt{c_3c_5}  \s^2 \le c_7 \Phi^L(\s,s), \quad s=1,\dots,d.
\end{equation*}
Combining this inequality with \eqref{decomposition_upperbound_linear_1} and \eqref{proof_upper2} we obtain the theorem.

\subsection{Proofs of the lemmas} %\ref{lemma_power4} and \ref{lemma_probability}  }

\begin{proof}[Proof of Lemma~\ref{lemma_power4}]
For $s=s_0$, $\hat{L}_s - L= \hat{L}'_s - L=\s\sum_{i=1}^d\xi_i$. As a consequence 
\[
 \esp_\t (\hat{L}_s - L)^4 = \esp_\t (\hat{L}'_s - L)^4 = 3\s^4 d^2\leq 3\s^4 d^4(\log d)^2\ .
\]
Henceforth, we focus on the case $s\leq \sqrt{d\log(d)/2}$. We have
\begin{equation}\label{eq:lem1}
\hat{L}_s - L = \s\sum_{i=1}^d \xi_i - \sum_{i=1}^d y_i \fcar_{y_i^2\le \a\s^2\log(1+d(\log d)/s^2)}.
\end{equation}
Thus, 
\begin{equation*}
\esp_\t (\hat{L}_s - L)^4 \le 8\Big( \s^4\esp\Big(\sum_{i=1}^d \xi_i \Big)^4 +d^4 \a^2\s^4\log^2(1+d(\log d)/s^2)\Big)\le c_3 \s^4 d^4 (\log d)^2.
\end{equation*}
In a similar way, 
\begin{equation}\label{eq:lem1_2}
\hat{L}_s' - L = \s\sum_{i=1}^d \xi_{i} - \sum_{i=1}^d y_{i} \fcar_{y_{i}^2\le \a{\hat\s}^2\log(1+d(\log d)/s^2)},
\end{equation}
and
\begin{equation*}
\esp_\t (\hat{L}_s' - L)^4 \le 8\Big( \s^4\esp\Big(\sum_{i=1}^d \xi_{i} \Big)^4 +d^4 \a^2\esp_\t({\hat\s}^4)\log^2(1+d(\log d)/s^2)\Big).
\end{equation*}
The desired bound for $\esp_\t (\hat{L}_s' - L)^4$  follows from this inequality and \eqref{eq2_prop1}.
\end{proof}

\begin{proof}[Proof of Lemma~\ref{lemma_probability}] 
We start by proving part (i) of Lemma~\ref{lemma_probability}. Note first that, for $s \leq \sqrt{d \log d/2}$ and all $\theta$ we have
\begin{equation}\label{eq:lem2_1a}
\prob_\t\big(|\hat{L}_{s}-L|>3\o_{s'}/4\big)\le \prob_\t\big(|\hat{L}_{s}-L|>3\o_{s}/4\big), \quad \forall \ s< s'\le  d.
\end{equation}
Indeed, if $s<s'$ we have $\o_{s'}>\o_{s}$ since the function $t\mapsto \o_t$ is increasing for $t>0$. Thus
$$
\prob_\t\big(|\hat{L}_{s'}-\hat{L}_{s}|>\o_{s'}\big)\le \prob_\t\big(|\hat{L}_{s'}-L|>3\o_{s}/4\big) + \prob_\t\big(|\hat{L}_{s}-L|>\o_{s}/4\big).
$$
This inequality and the definition of $\hat{s}$ imply that, for all $s \leq \sqrt{d \log d/2}$ and all $\theta$, 
\begin{align}\label{eq:lem2_2a}
%\sup_{\t\in\T_s} 
\prob_\t(\hat{s}>s) \le & \sum_{s<s'\le d} \prob_\t\big(|\hat{L}_{s'}-\hat{L}_{s}|>\o_{s'}\big) \\
\le & ~d \prob_\t\big(|\hat{L}_{s}-L|>3\o_{s}/4\big)+ \sum_{s<s'\le d}  \prob_\t \big(|\hat{L}_{s'}-L|>\o_{s'}/4\big).\nonumber
\end{align}
Note that, for $\sqrt{d \log d/2}<  s'\le d$,  we have $\hat{L}_{s'}=\sum_{i=1}^d y_i$, and $\o_{s'}\ge \s\sqrt{\b d \log d}\sqrt{\log(3)/2}$ by monotonicity. Hence, for $\sqrt{d \log d/2}<  s'\le d$, and all $\theta$,
$$
\prob_\t \big(|\hat{L}_{s'}-L|>\o_{s'}/4\big)\le \prob \Big(\big|\sum_{i=1}^d \xi_i\big|>\frac{\sqrt{\b d \log d}}{4}\sqrt{\log(3)/2}\Big)\le 2d^{-\b\log(3)/64},
$$
where we have used that $\xi_i$ are i.i.d. standard Gaussian random variables. This inequality and \eqref{eq:lem2_2a} imply that, for $s  \le \sqrt{d \log d/2}$, and all $\theta$,
 \begin{align}\label{eq:lem2_2b}
\prob_\t(\hat{s}>s) 
\le & ~ \sqrt{d \log d/2} \max_{s<s'\leq  \sqrt{d \log d}}  \prob_\t \big(|\hat{L}_{s'}-L|>3\o_{s'}/4\big)\\
&~+d \prob_\t\big(|\hat{L}_{s}-L|>3\o_{s}/4\big) + 2d^{-\b\log(3)/64}.\nonumber
\end{align}
As $\T_s\subset \T_{s'}$ for $s<s'$, we have
$$
 \max_{s<s'\leq  \sqrt{d \log d}} \ \sup_{\t\in\T_s} \prob_\t \big(|\hat{L}_{s'}-L|>3\o_{s'}/4\big)\le \max_{s'\leq  \sqrt{d \log d}} \ \sup_{\t\in\T_{s'}} \prob_\t \big(|\hat{L}_{s'}-L|>3\o_{s'}/4\big).
$$
Together with \eqref{eq:lem2_2b} this implies
 \begin{align*}%\label{eq:lem2_2c}
\max_{s \leq \sqrt{d \log d}} \ \sup_{\t\in\T_s}  \prob_\t(\hat{s}>s) 
\le & ~ 2d \max_{s'\leq  \sqrt{d \log d}} \ \sup_{\t\in\T_{s'}} \prob_\t \big(|\hat{L}_{s'}-L|>3\o_{s'}/4\big)
+ 2d^{-\b\log(3)/64}. 
\end{align*}
Considering the assumption on $\b$, the last summand in this inequality does not exceed $2d^{-5}$.
Thus,  it remains to bound the first term in the right-hand side.

Fix $s\leq \sqrt{d \log d/2}$ and let $\t$ belong to $\T_s$. We will denote by  $S$ the  support of $\t$ and we set for brevity 
$$
a\triangleq \sqrt{\log(1+d(\log d)/s^2)}.
$$
From \eqref{eq:lem1} and the fact that $y_i=\t_i+\s\xi_i$ we have
\begin{align}\label{eq:lem2_3a}
|\hat{L}_s - L| = &\Big|\s\sum_{i\in S} \xi_i - \sum_{i\in S} y_i \fcar_{y_i^2\le \a\s^2a^2} + \s\sum_{i\not \in S} \xi_i \fcar_{\xi_i^2> \a a^2}\Big|\\
\le & \s \Big|\sum_{i\in S} \xi_i\Big| + \s\Big|\sum_{i\not \in S} \xi_i \fcar_{\xi_i^2> \a a^2}\Big|+  \sqrt{\a}\s s a . \nonumber
\end{align}
Recalling that $\o_s=\sqrt{\b}\s s a$ we find
\begin{align}\label{eq:lem2_3} 
\prob_\t \big(|\hat{L}_{s}-L|>3\o_{s}/4\big) &\le \prob \Big(\Big|\sum_{i\not \in S} \xi_i \fcar_{\xi_i^2> \a a^2}\Big|> \sqrt{\a}s a\Big) \\
&+
\prob \Big(\Big|\sum_{i \in S} \xi_i \Big|> (3\sqrt{\b}/4-2\sqrt{\a})s a\Big) \nonumber
\end{align}
 Since $\xi_i$ are i.i.d. $\nzeroun$ random variables, we have
\begin{equation}\label{eq:lem2_4}
\prob \Big(\Big|\sum_{i \in S} \xi_i \Big|> (3\sqrt{\b}/4-2\sqrt{\a})s a\Big) \le 2 \exp\Big(-\frac{(3\sqrt{\b}/4-2\sqrt{\a})^2}{2} s a^2 \Big).
\end{equation}
We now use the relation
\begin{equation}\label{eq:lem2_5}
s a^2= s \log(1+d(\log d)/s^2) \ge \log d \quad \text{for all} \ s\in [1,\sqrt{d \log d/2}],
\end{equation}
since the function $s\to s\log(1+d\log(d)/s^2)$ is increasing. It follows from \eqref{eq:lem2_4}, \eqref{eq:lem2_5} and the assumption on $\a$ and $\b$ that
\begin{equation}\label{eq:lem2_6}
\prob \Big(\Big|\sum_{i \in S} \xi_i \Big|> (3\sqrt{\b}/4-2\sqrt{\a})s  a\Big) \le 2d^{-6}.
\end{equation}

Next, consider the first probability on the right hand side of \eqref{eq:lem2_3}. To bound it from above,
we invoke the following lemma. 
%%%%%%%%%

\begin{lemma}\label{lemma_truncated_gaussian}
If $\a>48$, for all  $s\leq \sqrt{d \log d/2}$ and all  $U\subseteq \{1,\dots, d\}$,
\begin{equation*}
\prob \Big(\sup_{t\in[1,10]}\Big|\sum_{i\in U} \xi_i \fcar_{|\xi_i|> \sqrt{\a} a t}\Big|> \sqrt{\a} s a\Big)\le c_8d^{-6}.
\end{equation*}
\end{lemma}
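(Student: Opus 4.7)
The plan is to first reduce the supremum over $t\in[1,10]$ to a single-threshold truncated sum via a monotonicity argument, and then apply a Chernoff bound to the resulting sum of truncated Gaussians.

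For the \emph{monotonization}, decompose $\xi_i=\xi_i^+-\xi_i^-$ into positive and negative parts and define $W^\pm(t)=\sum_{i\in U}\xi_i^{\pm}\fcar_{|\xi_i|>\sqrt{\a}a t}$. Both $W^+$ and $W^-$ are nonnegative and nonincreasing in $t\ge 1$, since as $t$ grows the indicator excludes more indices while the sign parts $\xi_i^\pm$ are nonnegative. Therefore
\[
\sup_{t\in[1,10]}\Big|\sum_{i\in U}\xi_i\fcar_{|\xi_i|>\sqrt{\a}a t}\Big|\le W^+(1)+W^-(1)=\sum_{i\in U}|\xi_i|\fcar_{|\xi_i|>\sqrt{\a}a}=:T,
\]
and it suffices to prove $\prob(T>\sqrt{\a}sa)\le c_8 d^{-6}$.

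For the \emph{Chernoff bound on $T$}, set $c=\sqrt{\a}a$ and $Y_i=|\xi_i|\fcar_{|\xi_i|>c}$. Completing the square in the Gaussian integral and applying the Mills ratio yields, for $0<\lambda<c$,
\[
\esp\bigl(e^{\lambda|\xi|}\fcar_{|\xi|>c}\bigr)=2e^{\lambda^2/2}\bar\Phi(c-\lambda)\le \frac{2\exp(c\lambda-c^2/2)}{\sqrt{2\pi}(c-\lambda)}.
\]
Combining with the elementary inequality $(1+u)^{|U|}\le e^{u|U|}$ and Markov's inequality,
\[
\prob(T>\sqrt{\a}sa)\le \exp\Big(-\lambda\sqrt{\a}sa+\frac{2|U|\exp(c\lambda-c^2/2)}{\sqrt{2\pi}(c-\lambda)}\Big).
\]
I would take $\lambda=c/4$, which turns the linear term into $-\a sa^2/4$; by \eqref{eq:lem2_5} this is at most $-(\a/4)\log d\le -12\log d$ as soon as $\a>48$. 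The correction term inherits the decay factor $e^{-\a a^2/4}\le(1+d(\log d)/s^2)^{-\a/4}$, and for $\a>48$ the net exponent should stay below $-6\log d$, giving the desired bound.

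The main obstacle is the uniform control of the correction term across the whole range $s\in[1,\sqrt{d\log d/2}]$. For small $s$ one has $a^2\asymp\log d$, so the factor $|U|e^{-\a a^2/4}\le d\cdot d^{-\a/4}$ is comfortably $o(1)$ and the pure Chernoff optimization delivers the claim with room to spare. In the delicate large-$s$ regime $s\asymp\sqrt{d\log d}$, however, $a^2$ is only of order $\log 3$, so the decay $3^{-\a/4}$ must alone overpower the combinatorial factor $|U|\le d$; this is precisely what the threshold $\a>48$ is calibrated to achieve. If the Chernoff bound proves numerically too loose in this sub-regime, it can be complemented by a Bernstein-type concentration around the mean, exploiting that $\mathrm{Var}(T)\le|U|\esp Y_1^2\lesssim d\,c\,e^{-c^2/2}$ is much smaller than the squared target $\a s^2 a^2$; the small variance then lets one close the remaining gap and obtain the factor $d^{-6}$ uniformly in $s$.
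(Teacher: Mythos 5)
Your reduction of the supremum to the single sum $T=\sum_{i\in U}|\xi_i|\fcar_{|\xi_i|>\sqrt{\a}a}$ is where the argument breaks. That step discards exactly the cancellation between positive and negative terms that makes the lemma true: the quantity in the lemma is a \emph{signed} sum with zero mean (by symmetry of the $\xi_i$), whereas $T$ is a sum of nonnegative variables with
$\esp T=|U|\sqrt{2/\pi}\,e^{-\a a^2/2}=|U|\sqrt{2/\pi}\,(1+d(\log d)/s^2)^{-\a/2}$.
In the regime $s^2\asymp d\log d/2$ one has $1+d(\log d)/s^2\approx 3$, so $\esp T\asymp 3^{-\a/2}d$, a fixed positive constant (depending only on $\a$) times $d$, while the target threshold is $\sqrt{\a}\,sa\asymp\sqrt{d\log d}$. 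Hence for every fixed $\a$ there is a $d$ beyond which $\esp T$ exceeds the threshold, and since $T$ concentrates around its mean (e.g.\ the number of exceedances is Binomial and of order $d\,e^{-\a a^2/2}$ w.h.p.), $\prob(T>\sqrt{\a}sa)\to 1$ rather than being $O(d^{-6})$. This is visible in your own Chernoff exponent, $-\tfrac{\a}{4}sa^2+c\,d\,e^{-\a a^2/4}$: the second term grows linearly in $d$ in this regime and eventually dominates. Your suggested Bernstein fallback cannot repair this, because Bernstein controls fluctuations of $T$ around $\esp T$, and it is $\esp T$ itself that is too large; no choice of $\a>48$ ``calibrates'' a constant $3^{-\a/4}$ to beat a factor of $d$ uniformly in $d$.

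The fix is to keep the signs. The paper conditions on the moduli $(|\xi_i|)_{i\in U}$ and views the signs as independent Rademacher variables; the supremum over $t\in[1,10]$ is then a function of the signs satisfying the bounded differences condition with increments $2Z_i$, $Z_i=|\xi_i|\fcar_{|\xi_i|>\sqrt{\a}a}$. The bounded differences inequality gives a conditional tail $\exp\bigl(-\a s^2a^2/(2\sum_i Z_i^2)\bigr)$, and one then controls $\sum_i Z_i^2$ (whose expectation is only $\asymp d\,a\,e^{-\a a^2/2}$ and enters \emph{inside} a Gaussian exponent, not linearly against the threshold) via the Fuk--Nagaev inequality. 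If you want to avoid bounded differences, you could instead apply a Chernoff/Hoeffding bound directly to the centered signed sum $\sum_i\e_iZ_i$ conditionally on the $Z_i$, but some version of this symmetrization is indispensable; any bound that passes through $\sum_i|\xi_i|\fcar_{|\xi_i|>\sqrt{\a}a}$ is doomed in the large-$s$ regime. Your handling of the supremum over $t$ by monotonicity is fine in spirit and is essentially what the conditional argument also exploits; it is only the loss of centering that is fatal.
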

Combining \eqref{eq:lem2_3},  \eqref{eq:lem2_6} and Lemma~\ref{lemma_truncated_gaussian}  we obtain part (i) of Lemma \ref{lemma_probability}.

\smallskip

We now proceed to the proof of part (ii) of Lemma \ref{lemma_probability}. Proposition \ref{prop1} implies that, for $s \leq \sqrt{d \log d}$ and $\t\in\T_s$,
$$
\prob_\t(\hat{s}'>s)\le \prob_\t(\hat{s}'>s,  \hat{\s}\in[\s,10\s]) +  \prob_\t(\hat{\s}\not\in[\s,10\s])   .
$$
On the event $\{ \hat{\s}\in[\s,10\s]\}$, we can replace $\hat{\s}$ in the definition of $\hat{s}'$ either by $\s$ or by $10\s$ according to cases, thus making the analysis of $\prob_\t(\hat{s}'>s, \hat{\s}\in[\s,10\s])$ equivalent, up to the values of numerical constants, to the analysis of $\prob_\t(\hat{s}>s)$ given below. The only non-trivial difference consists in the fact that the analog of \eqref{eq:lem2_3a}
when $\hat{L}_{s}$ is replaced by $\hat{L}_{s}'$ contains the term  $\s\Big|\sum_{i\not \in S} \xi_i \fcar_{\xi_i^2> \a \hat{\s}^2a^2/\s^2}\Big|$ instead of $\s\Big|\sum_{i\not \in S} \xi_i \fcar_{\xi_i^2> \a a^2}\Big|$ while $\hat{\s}$ depends on $\xi_1,\dots,\xi_d$. This term is evaluated using Lemma~\ref{lemma_truncated_gaussian} and the fact that 
\begin{equation*}
\prob \Big(\Big|\sum_{i\not \in S} \xi_i \fcar_{|\xi_i|>  \sqrt{\a} \hat{\s} a/\s}\Big|> \sqrt{\a} s a, \ \hat{\s}\in[\s,10\s] \Big)
\le \prob \Big(\sup_{t\in[1,10]}\Big|\sum_{i\not \in S} \xi_i \fcar_{|\xi_i|> \sqrt{\a} a t}\Big|> \sqrt{\a} s a\Big).
\end{equation*}
We omit further details that are straightforward from inspection of the proof of part (i) of Lemma~\ref{lemma_probability} given above. Thus, part (ii) of Lemma \ref{lemma_probability} follows.
\end{proof}

%\begin{lemma}\label{lemma_truncated_gaussian_bis}
%For any absolute constant $\a>0$ large enough, for all  $s<\sqrt{d \log d}$ and all  $U\subseteq \{1,\dots, d\}$,
%\begin{equation*}
%\prob \Big(\Big|\sum_{i\in U} \xi_i \fcar_{\xi_i^2> \a \hat{\s}^2a^2/\s^2}\Big|> \sqrt{\a} s a, \ \hat{\s}\in[\s,10\s]\Big)\le d^{-6}.
%\end{equation*}
%\end{lemma}
%

For the proof of Lemma~\ref{lemma_truncated_gaussian}, recall the following fact about the tails of the standard Gaussian distribution, which can be proven by integration by part. 
\begin{lemma}\label{lemma_gaussian}
Let $X\sim\nzeroun$,  $x>1$ and  $q\in\NN$. There is a constant $C_q^*$ such that
\begin{equation*}
\esp\Big[ X^{2q}\fcar_{|X|>x} \Big] \le C_q^* x^{2q-1} e^{-x^2/2}.
\end{equation*}
Moreover, simulations suggest that $C_1^*\le1.1$.
\end{lemma}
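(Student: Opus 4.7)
The plan is to compute the integral $\esp[X^{2q}\fcar_{|X|>x}] = \frac{2}{\sqrt{2\pi}}\int_x^\infty t^{2q} e^{-t^2/2}\,dt$ (using symmetry of the Gaussian density) via a recursion obtained by integration by parts. Writing $t^{2q} = t^{2q-1}\cdot t$ and taking $u = t^{2q-1}$, $dv = t e^{-t^2/2}\,dt$, so that $v = -e^{-t^2/2}$ and $du = (2q-1)t^{2q-2}\,dt$, I obtain
\[
I_q(x) \triangleq \int_x^\infty t^{2q} e^{-t^2/2}\,dt = x^{2q-1} e^{-x^2/2} + (2q-1)\,I_{q-1}(x),
\]
with base case $I_0(x) = \int_x^\infty e^{-t^2/2}\,dt$.

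Iterating this recursion gives the closed form
\[
I_q(x) = e^{-x^2/2} \sum_{k=0}^{q-1} \frac{(2q-1)!!}{(2q-2k-1)!!}\, x^{2q-2k-1} + (2q-1)!!\, I_0(x).
\]
For the base term I apply the standard Mills-ratio inequality: for $x>0$,
\[
I_0(x) = \int_x^\infty e^{-t^2/2}\,dt \le \int_x^\infty \frac{t}{x}\, e^{-t^2/2}\,dt = \frac{1}{x}\, e^{-x^2/2}.
\]
Finally, I use the hypothesis $x>1$: every exponent $x^{2q-2k-1}$ for $k=0,\dots,q-1$ and the factor $x^{-1}$ from $I_0(x)$ satisfies $x^{2q-2k-1}\le x^{2q-1}$ and $x^{-1}\le x^{2q-1}$. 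Collecting terms gives
\[
\esp\big[X^{2q}\fcar_{|X|>x}\big] \le C_q^* x^{2q-1} e^{-x^2/2}, \qquad C_q^* = \frac{2}{\sqrt{2\pi}}\Big[(2q-1)!! + \sum_{k=0}^{q-1}\frac{(2q-1)!!}{(2q-2k-1)!!}\Big].
\]

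I do not expect any serious obstacle: the proof is a textbook computation. The only point requiring mild care is the bookkeeping of the recursion constants. The remark that $C_1^*\le 1.1$ is a numerical observation; the argument above yields the weaker analytical value $C_1^* \le \frac{4}{\sqrt{2\pi}}\approx 1.60$, which is already sufficient for the use made of the lemma elsewhere.
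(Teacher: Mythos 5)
Your proof is correct and is exactly the argument the paper intends: the paper states this lemma without proof, saying only that it "can be proven by integration by parts," which is precisely your recursion $I_q(x)=x^{2q-1}e^{-x^2/2}+(2q-1)I_{q-1}(x)$, closed by the Mills-ratio bound on $I_0$ and the hypothesis $x>1$ to absorb the lower powers of $x$. One small remark: the paper's numerical claim $C_1^*\le 1.1$ actually fails for $x$ close to $1$ (at $x=1$ the ratio $\esp[X^2\fcar_{|X|>x}]/(xe^{-x^2/2})\approx 1.32$) and only holds for moderately large $x$, such as the values $x=\sqrt{\a}\,a t$ arising in its application, so your analytic constant $4/\sqrt{2\pi}\approx 1.60$ is the honest statement, at the price of slightly adjusting the absolute constants where $1.1$ is invoked downstream.
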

We will also use the Fuk-Nagaev inequality \cite[page 78]{petrov} that we state here for reader's convenience.
\begin{lemma}[Fuk-Nagaev inequality]\label{lemma:fuk}
Let $p>2$ and $v>0$. Assume that $X_1,\ldots,X_n$ are independent random variables with $\esp (X_i)=0$  and $\esp |X_i|^p<\infty$, $i=1,\dots, n$. Then,
\begin{equation*}
\prob\Big(\sum_{i=1}^n X_i > v \Big) \le (1+2/p)^p \sum_{i=1}^n \esp |X_i|^p v^{-p} + \exp\left(- \frac{2 v^2}{(p+2)^2e^{p} \sum_{i=1}^n \esp X_i^2} \right).
\end{equation*}
\end{lemma}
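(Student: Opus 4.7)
The plan is to apply the classical Fuk--Nagaev strategy: combine truncation at a level $y>0$ with a Chernoff bound on the truncated sum, then calibrate $y$ and the Chernoff parameter $\lambda$ so that the two resulting pieces match the constants $(1+2/p)^p$ and $(p+2)^2 e^p$ in the statement. Introduce $Y_i = X_i \fcar_{X_i \le y}$ and $Z_i = X_i \fcar_{X_i > y}$, so $X_i = Y_i + Z_i$ and on $\{\max_i X_i \le y\}$ all $Z_i$ vanish with $\sum X_i = \sum Y_i$. A union bound then gives
\[
\prob\Bigl(\sum_{i=1}^n X_i > v\Bigr) \le \sum_{i=1}^n \prob(X_i > y) + \prob\Bigl(\sum_{i=1}^n Y_i > v\Bigr).
\]

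For the first term, Markov's inequality with the $p$-th moment yields $\sum_i \prob(X_i > y) \le y^{-p}\sum_i \esp|X_i|^p$. Setting $y = pv/(p+2)$ produces $y^{-p} = (1+2/p)^p v^{-p}$, which is \emph{exactly} the polynomial contribution in the stated bound. For the second term, I exploit $\esp X_i = 0$, which forces $\sum\esp Y_i = -\sum \esp Z_i \le 0$, so centering can only decrease the upper tail. The Bennett-type cumulant estimate for bounded centered variables,
\[
\esp e^{\lambda Y_i} \le \exp\!\Bigl(\lambda\esp Y_i + (\esp X_i^2)\,\frac{\phi(\lambda y)}{y^2}\Bigr), \qquad \phi(u) := e^u - 1 - u,
\]
combined with independence, $\sum\esp Y_i \le 0$, and Chernoff yields
\[
\prob\Bigl(\sum_i Y_i > v\Bigr) \le \exp\!\Bigl(-\lambda v + B_n\,\frac{\phi(\lambda y)}{y^2}\Bigr), \qquad B_n := \sum_i \esp X_i^2.
\]
Since $\phi(u) \le \tfrac12 u^2 e^u$, the right-hand side is at most $\exp(-\lambda v + \tfrac12 \lambda^2 B_n e^{\lambda y})$. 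With $y = pv/(p+2)$ already fixed, I then tune $\lambda$ (the natural choice pairs $\lambda y$ with $p$ so that $e^{\lambda y}=e^p$) to bring the exponent into the form $-2v^2/((p+2)^2 e^p B_n)$ announced in the statement.

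The main obstacle is the bookkeeping needed to simultaneously calibrate $y$ and $\lambda$, because both appear nonlinearly through $e^{\lambda y}$, and one must extract \emph{exactly} the constants $(1+2/p)^p$ and $(p+2)^2 e^p$. Several equivalent cumulant inequalities (for example, using $\phi(\lambda y)/(\lambda y)^2$ versus $\tfrac12 e^{\lambda y}$ in the quadratic term) lead to slightly different but comparable bounds, so one must commit to the specific form used in Petrov to recover the stated constants. Since this is the standard Fuk--Nagaev inequality and the remaining computation is entirely algebraic once the strategy is fixed, I would refer the reader to \cite[p.~78]{petrov} for the detailed verification.
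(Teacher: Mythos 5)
The paper never proves this lemma: it is imported verbatim from Petrov, so your closing appeal to \cite[page 78]{petrov} is in effect the same treatment as the paper's. The difficulty is that the route you sketch before that appeal is not "entirely algebraic once the strategy is fixed" --- it contains a step that cannot be completed. With the truncation $Y_i=X_i\fcar_{X_i\le y}$ at level $y=pv/(p+2)$, your decomposition spends the whole polynomial budget $(1+2/p)^p v^{-p}\sum_i\esp|X_i|^p$ on $\sum_i\prob(X_i>y)$, and therefore needs $\prob\bigl(\sum_i Y_i>v\bigr)\le\exp\bigl(-2v^2/((p+2)^2e^p B_n)\bigr)$ on its own, where $B_n=\sum_i\esp X_i^2$. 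No tuning of $\lambda$ gives this. Your suggested calibration $e^{\lambda y}=e^p$ forces $\lambda=(p+2)/v$ and yields the exponent $-(p+2)+\tfrac{(p+2)^2e^pB_n}{2v^2}$, which stays bounded as $v^2/B_n\to\infty$ and so cannot match an exponent of order $-v^2/B_n$. Even the optimal (Bennett) choice of $\lambda$ gives the exponent $-\tfrac{B_n}{y^2}\bigl((1+c)\log(1+c)-c\bigr)$ with $c=vy/B_n$, and the inequality $(1+c)\log(1+c)-c\ge 2c^2/((p+2)^2e^p)$ needed to dominate the stated exponential term fails once $c$ is large. In fact the required bound on the truncated sum is simply false: let each $X_i$ equal $y$ with probability $q$ and a small negative value otherwise (centered), with $nq$ small. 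Then $\sum_i\prob(X_i>y)=0$, $B_n\approx nqy^2$, and since $y<v<2y$ the event $\{\sum_i Y_i>v\}$ requires at least two variables at the top value, so its probability is $\approx (nq)^2/2$, which for small $nq$ vastly exceeds $\exp(-2/(p^2e^pnq))$; the full inequality survives only because the polynomial term ($\approx nq$) covers it.

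So the missing idea is structural, not bookkeeping: in the genuine Fuk--Nagaev argument the $p$-th moments also enter the estimate of $\esp e^{\lambda Y_i}$ (the contribution of the range $0<X_i\le y$ is controlled by $p$-th moments rather than by the variance alone), and after optimizing $\lambda$ the polynomial term absorbs part of the truncated-sum tail --- it is not just a bound on $\prob(\max_i X_i>y)$. If you do not wish to reproduce that argument with these exact constants, the honest course, and the one the paper takes, is to state the lemma as a quoted result with the reference to Petrov, rather than to present a truncation-plus-Bennett sketch that cannot be completed to yield the stated bound.
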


\begin{proof}[Proof of Lemma~\ref{lemma_truncated_gaussian}] We have 
\begin{eqnarray*}\label{eq1:lemma_truncated_gaussian_bis}
p_0 &\triangleq& \prob \Big(\sup_{t\in[1,10]}\big|\sum_{i\in U} \xi_i \fcar_{|\xi_i|> \sqrt{\a} a t}\big|> \sqrt{\a} s a\Big) 
\\
&=&
 \esp \Big[\prob \Big(\sup_{t\in[1,10]}\big|\sum_{i\in U} \e_i |\xi_i |\fcar_{|\xi_i|> \sqrt{\a} a t}\big|> \sqrt{\a} s a \,\Big| \ |\xi_i |, i\in U \Big)\Big]
\end{eqnarray*}
where $\e_i$ denotes the sign of $\xi_i$. Consider the function  $g(x) = \sup_{t\in[1,10]}\big|\sum_{i\in U} x_i |\xi_i |\fcar_{|\xi_i|> \sqrt{\a} a t}\big|$ where $x=(x_i, i\in U)$ with $x_i\in \{-1,1\}$. For any $i_0\in U$, let $g_{i_0,u}(x)$ denote the value of this function when we replace $x_{i_0}$ by $u\in \{-1,1\}$. Note that, for any fixed $(|\xi_i |, i\in U)$, we have the bounded differences condition:
$$
\sup_{x}|   g(x)- g_{i_0,u}(x)|
\le 2|\xi_i| \fcar_{|\xi_i|> \sqrt{\a} a } \triangleq 2Z_i
\quad \forall u \in \{-1,1\}, i_0\in U.
$$
The vector of Rademacher random variables $(\e_1,\dots,\e_d)$ is independent from $(|\xi_1|,\dots,|\xi_d|)$.Thus, for any fixed $(|\xi_i |, i\in U)$ we can use the bounded differences inequality% (see, e.g.,\cite{?})
, which yields
\begin{eqnarray}\label{eq2:lemma_truncated_gaussian_bis}
\quad\quad
p_0 \le 
 \esp \Big[\exp\Big(- \frac{\a s^2a^2}
 {2 \sum_{i\in U} Z_i^2 }  \Big)\Big]
 \le  \exp\Big(- \frac{\a s^2a^2}
 {2 \D }  \Big) + \prob\Big( \sum_{i\in U} Z_i^2 >\D\Big), \quad \forall \ \D>0.
\end{eqnarray}
We now set $\D=  \sum_{i\in U} \esp Z_i^2  + d\a a^2\exp\left(-\a a^2/(2p)\right)$ with $p=\a/8>6$.

To bound from above the probability $\prob\Big( \sum_{i\in U} Z_i^2 >\D\Big)$ we apply Lemma~\ref{lemma:fuk} with $X_i=Z_i^2  - \esp( Z_i^2 )$ and $v= \a a^2 d\exp\left(-\a a^2/(2p)\right)$.
The random variables $X_i$ are centered and satisfy, in view of Lemma~\ref{lemma_gaussian},
\begin{equation}\label{fuk1}
\esp |X_i|^p \le 2^{p-1}\esp  |Z_i|^{2p} \le  2^{p-1}C_{p}^*(\sqrt{\a}a)^{2p-1} e^{-\a a^2/2}%, \quad \esp Z_i^2 \le C_1^* \sqrt{\a}a \,e^{-\a a^2/2}.
\end{equation}
Thus, Lemma~\ref{lemma:fuk} yields
\begin{eqnarray*}
\prob\Big( \sum_{i\in U} Z_i^2 >\D\Big)
% &\le& c_{9} \frac{d  (\sqrt{\a}a)^{2p-1}\exp(-\a a^2/2)}{\D^p}  + \exp\left(- c_{10}\frac{ \D^2}{d  \sqrt{\a}a\exp(-\a a^2/2)} \right)
% \\
 &\le&   C_{p}^* 2^{p-1}(1+2/p)^p \frac{(\sqrt{\a}a)^{-1}}{d^{p}}  + \exp\left(- \frac{ \sqrt{\a}a d \exp(\a a^2(1/2-1/p))}{2(p+2)^2e^{p}C_2^*} \right).
\end{eqnarray*}
The expression in the last display can be made smaller than $c_9d^{-6}$ for all $d\ge 3$.

Finally, using \eqref{fuk1} we find 
\begin{align*}\label{expon1}
\frac{\a s^2a^2}
 {2 \D } &\ge \frac{\a s^2a^2}{2d (C_1^*\sqrt{\a} a \exp(-\a a^2/2)+ \a a^2 \exp(-\a a^2/(2p)))}\ge  \frac{s^2 \exp(\a a^2/(2p))}{4.4d},
\end{align*}
whereas
$$
\frac{s^2 \exp(\a a^2/(2p))}{d}=
\frac{s^2}{d}  \Big(1+ \frac{d\log d}{s^2}\Big)^{\a/(2p)}  = \log d \Big(\frac{s^2}{d\log(d)}+1\Big) \times \Big(1+\frac{d\log d}{s^2}\Big)^{\a/(2p)-1}\ge  3^3\log d
$$
for any $s\leq\sqrt{d \log d/2}$, since $\a= 8p$. Hence, for such $s$,
\begin{align*}
\exp\Big(- \frac{\a s^2a^2}
 {2 \D }  \Big)  \le c_{10}d^{-6}.
\end{align*} 
Thus, Lemma~\ref{lemma_truncated_gaussian} follows.
\end{proof}

\begin{lemma}\label{lem:risk} There exists an absolute constant $d_0\ge3$ such that if $\a>48$, we have
\begin{equation*}
\sup_{\t\in \Theta_s} \esp_\t (\hat{L}_{s}-L)^2 \le c_1\Phi^L(\s,s), \quad \sup_{\t\in \Theta_s} \esp_\t (\hat{L}_{s}'-L)^2 \le c_{11}~\Phi^L(\s,s), \quad \forall s\leq\sqrt{d\log d/2}.
\end{equation*}
\end{lemma}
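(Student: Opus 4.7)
My plan is to prove both inequalities via the same three-term decomposition, the only new ingredient in the unknown-$\s$ case being control of the random threshold. Let $S$ be the support of $\t$, so $|S|\le s$, and write $a^2=\log(1+d(\log d)/s^2)$. Using $\sum_j y_j=L+\s\sum_j\xi_j$ together with the definition of $\hat L_s$ one obtains
\[
\hat L_s-L=\s\sum_{j\in S}\xi_j\;-\;\sum_{j\in S}y_j\fcar_{y_j^2\le\a\s^2a^2}\;+\;\s\sum_{j\notin S}\xi_j\fcar_{|\xi_j|>\sqrt\a\,a}.
\]
The bound $(x+y+z)^2\le 3(x^2+y^2+z^2)$ reduces everything to three easy estimates. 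The first summand has variance $\s^2|S|\le\s^2s$; the second is deterministically bounded by $s\sqrt\a\s a$ since each truncated term satisfies $|y_j\fcar_{y_j^2\le\a\s^2a^2}|\le\sqrt\a\s a$, whose square is $\a\,\Phi^L(\s,s)$; and the third, being a sum of independent mean-zero summands (by symmetry of $\xi_j$), has variance at most $\s^2 dC_1^\ast\sqrt\a a\,e^{-\a a^2/2}$ by Lemma~\ref{lemma_gaussian}. Since $e^{-\a a^2/2}=(1+d(\log d)/s^2)^{-\a/2}$ and $\a>48$, a short calculation using $d(\log d)/s^2\ge 2$ on the range $s\le\sqrt{d\log d/2}$ shows that this third bound is still $O(\s^2 s^2 a^2)$, with substantial power-of-$d$ slack at both endpoints $s=1$ and $s\asymp\sqrt{d\log d}$. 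Summing the three pieces gives the first inequality.

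For $\hat L_s'$, the same algebraic expansion produces
\[
|\hat L_s'-L|\le\s\Big|\sum_{j\in S}\xi_j\Big|+s\sqrt\a\hat\s a+\s\Big|\sum_{j\notin S}\xi_j\fcar_{|\xi_j|>T}\Big|,\qquad T\triangleq\sqrt\a\hat\s a/\s,
\]
but $T$ now depends on the data through $\hat\s$. I would split on the event $B=\{\s\le\hat\s\le 10\s\}$, which by Proposition~\ref{prop1} satisfies $\prob_\t(B^c)\le d^{-5}$. On $B^c$, Cauchy--Schwarz combined with Lemma~\ref{lemma_power4} gives $\esp_\t[(\hat L_s'-L)^2\fcar_{B^c}]\le\sqrt{c_4\s^4 d^4(\log d)^2}\,d^{-5/2}=O(\s^2(\log d)/\sqrt d)$, which is dominated by $\Phi^L(\s,s)\ge\s^2\log 3$ once $d\ge d_0$. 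On $B$ the first summand contributes $3\s^2|S|$ just as before, and the second is bounded by $300\a\Phi^L(\s,s)$ using $\hat\s\le 10\s$; so everything reduces to the third summand on $B$.

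This third summand is the main obstacle: the signed sum $\sum_{j\notin S}\xi_j\fcar_{|\xi_j|>T}$ has a data-dependent threshold $T\in[\sqrt\a a,10\sqrt\a a]$, so the clean centred-variance identity used for $\hat L_s$ no longer applies, and the naive unsigned majorisation $\sum_{j\notin S}|\xi_j|\fcar_{|\xi_j|>\sqrt\a a}$ is too crude because its mean-squared contribution $d^2e^{-\a a^2}$ need not be small. My workaround is to pass to $M\triangleq\sup_{t\in[1,10]}\bigl|\sum_{j\notin S}\xi_j\fcar_{|\xi_j|>\sqrt\a at}\bigr|$, which dominates the third term on $B$, and to bound $\esp M^2$ by splitting at the level $\sqrt\a sa$. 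The event $\{M\le\sqrt\a sa\}$ contributes at most $\a s^2a^2$; on the complementary event, Cauchy--Schwarz together with the trivial majorisation $M\le\sum_j|\xi_j|$ and Jensen's inequality ($\esp(\sum_j|\xi_j|)^4\le 3d^4$) yields $\sqrt{\esp M^4}\sqrt{\prob(M>\sqrt\a sa)}\le\sqrt{3d^4}\cdot\sqrt{c_8d^{-6}}=O(1/d)$, where the probability bound is exactly what Lemma~\ref{lemma_truncated_gaussian} provides when applied to $U=\{1,\dots,d\}\setminus S$. Hence $\esp M^2\le\a s^2a^2+O(1/d)\le C\Phi^L(\s,s)/\s^2$, and combining all contributions yields the second inequality with an absolute constant $c_{11}$.
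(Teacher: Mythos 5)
Your proposal is correct and follows essentially the same route as the paper: the same three-term decomposition with the mean-zero variance computation via Lemma~\ref{lemma_gaussian} for the known-$\s$ case, and for $\hat L_s'$ the same splitting on the event $\{\hat\s\in[\s,10\s]\}$, domination of the data-dependent threshold term by the supremum $W=\sup_{t\in[1,10]}|\sum_{i\notin S}\xi_i\fcar_{|\xi_i|>\sqrt\a a t}|$, and the second-moment bound on $W$ obtained by splitting at level $\sqrt\a sa$ and applying Cauchy--Schwarz with Lemma~\ref{lemma_truncated_gaussian}. The only cosmetic difference is that you bound the middle term on the good event via $\hat\s\le10\s$ whereas the paper uses $\esp_\t(\hat\s^2)\le\sqrt{\bar C}\s^2$ from \eqref{eq2_prop1}; both work.
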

\begin{proof}  We easily deduce from \eqref{eq:lem2_3a} that
\begin{align*}%\label{}
\esp_\t (\hat{L}_{s}-L)^2 \le  3 \s^2\Big( s + d \esp \left[ X^2 \fcar_{X^2> \a a^2}\right]+  \a s^2 a^2\Big),
\nonumber
\end{align*}
where $X\sim \nzeroun$. By Lemma~\ref{lemma_gaussian}, \begin{equation*}\label{eq:fukk}
d \esp \left[ X^2 \fcar_{X^2> 2 a^2}\right]\le C_1^* ad \exp(-a^2) = \frac{C_1^* ad s^2}{s^2+d\log d} \le \frac{C_1^*s^2 a}{\log d},
\end{equation*}
which implies that the desired bound for $\esp_\t (\hat{L}_{s}-L)^2$ holds since $\a\ge2$. Next, we prove the bound of the lemma for $\esp_\t (\hat{L}_{s}'-L)^2$. Similarly to \eqref{eq:lem2_3a},
\begin{align}\label{eq:lem2_3aa}
\hat{L}_s' - L = &\s\sum_{i\in S} \xi_{i} - \sum_{i\in S} y_{i} \fcar_{y_{i}^2\le \a\hat{\s}^2a^2} + \s\sum_{i\not \in S} \xi_{i} \fcar_{\s^2\xi_{i}^2> \a \hat{\s}^2 a^2}.
\nonumber
\end{align}
This implies
\begin{eqnarray}\label{uuu}
\quad \esp_\t \left[(\hat{L}_s' - L)^2\fcar_{\hat{\s}\in[\s,10\s]}\right]
&\le & \ \esp_\t\Big(\s\Big|\sum_{i\in S} \xi_{i}\Big| +  \sqrt{\a}\hat{\s} s a + \s W \Big)^2\\
&\le&   \
3 \Big( \s^2 s +  \a \esp_\t(\hat{\s}^2) a^2 s^2+  \s^2  \esp (W^2)\Big),\nonumber
%\\
%\le &  \ c_{11} \Phi^L(\s,s)
\end{eqnarray}
where $W\triangleq \sup_{t\in[1,10]}\Big|\sum_{i\not \in S} \xi_{i} \fcar_{|\xi_{i}|> \sqrt{\a} a t}\Big|$.
Using Lemma~\ref{lemma_truncated_gaussian} we find that, for all $\a>48$, 
\begin{eqnarray*}\label{}
	\esp (W^2)
	&\le & \ (\sqrt{\a}sa)^2 + \esp\Big(\sum_{i\not \in S} |\xi_{i}| \Big)^2 \fcar_{W> \sqrt{\a} s a}
	\\
	&\le & \ \a s^2 a^2 + \Big[\esp\Big(\sum_{i\not \in S} |\xi_{i}| \Big)^4 \Big]^{1/2} c_9 d^{-3}
	\le  \a s^2 a^2 + c_9\sqrt{3}d^{-1}.
\end{eqnarray*}
Plugging this bound in \eqref{uuu} and using \eqref{eq2_prop1} we get 
$$
\esp_\t \left[(\hat{L}_s' - L)^2\fcar_{\hat{\s}\in[\s,10\s]}\right]
\le  c_{12} \Phi^L(\s,s).
$$
On the other hand, by virtue of Lemma \ref{lemma_power4} and \eqref{eq1_prop1},
\begin{align*}
\esp_\t \left[(\hat{L}_s' - L)^2\fcar_{\hat{\s}\not\in[\s,10\s]}\right]\le \sqrt{\prob_\t(\hat{\s}\not\in[\s,10\s])}\sqrt{\esp_\t (\hat{L}_s' - L)^4}\le \frac{\sqrt{c_3}\s^2\log d}{d^{1/2}}\le c_{13} \Phi^L(\s,s).
\end{align*}
The desired bound for $\esp_\t (\hat{L}_{s}'-L)^2$ %in the case $s<\sqrt{d\log d}$ 
follows from the last two displays. 

%It remains to consider the case
%$\sqrt{d\log d}\le s\le d$, which is straightforward since then we have $\esp_\t (\hat{L}_{s}-L)^2 = \esp_\t (\hat{L}_{s}'-L)^2= \sigma^2 d\le \Phi^L(\s,s)$ for any $d\ge 3$ in view of \eqref{dlogd0}.  
\end{proof}

\subsection{Proofs of Proposition \ref{prop1} and of Theorem \ref{th:unknown_sigma}} 
\begin{proof}[Proof of Proposition \ref{prop1}] 
Since $s\leq d/2$, there exists a subset  $T$ of size $\lfloor d/2\rfloor $ such that $T\cap S=\emptyset$.
By Definition of $\hat{\s}^2$, we obtain that 
$$\hat{\s}^2\leq \frac{81\sigma^2}{\lfloor d/2\rfloor }\sum_{i\in T} \xi_i^2\ .$$
This immediately implies  \eqref{eq2_prop1}. To prove \eqref{eq1_prop1}, note that the Gaussian concentration inequality (\cf\cite{LedouxTalagrand1991}) yields
$$\prob\Big(\Big(\sum_{i\in T} \xi_i^2\Big)^{1/2} > \sqrt{100\lfloor d/2\rfloor/81  }\Big)\le \exp(-cd)\ , $$
for a positive constant $c$. 
Therefore,
\begin{equation}\label{eq:proof:prop1}
\prob_\t(\hat{\s} \le 10  \s)\ge 1-\exp(-c d).
\end{equation}
Next, let $\mathcal{G}$ be the collection of all subsets of $\{1,\dots,d\}$ of cardinality $\lfloor d/2\rfloor$. We now establish a bound on the deviations of random variables $Z_G = \frac{1}{\s^2}\sum_{i\in G} y_i^2$ uniformly over all $G\in \mathcal{G}$. Fix any $G\in \mathcal{G}$. The random variable $Z_G$ has a chi-square distribution with $\lfloor d/2\rfloor$ degrees of freedom and non-centrality parameter $\sum_{i\in G}\theta_i^2$. In particular, this distribution is stochastically larger than a  central chi-square distribution with $d' = \lfloor d/2\rfloor$ degrees of freedom. Let $Z$ be a random variable with this central chi-square  distribution. For the tail probability of $Z$, we can use Lemma 11.1 in \cite{verzelenTGD} that gives
\[
 \mathbf{P}\Big(Z \leq \frac{d'}{e}x^{2/d'}\Big)\leq x, \qquad \forall \ x>0.
\]
Take $x= \binom{d}{d'}^{-1}e^{- d'/2}$. Using the bound $\log \binom{d}{d'}\leq d'\log(ed/d')$ it follows that $\log(1/x)\leq d'(\tfrac{3}{2}+ \log(\tfrac{d}{d'}))\leq d'(\tfrac{3}{2}+\log 2) + 1$. Taking the union bound over all $G\in \mathcal{G}$ we conclude that 
\[
 \mathbf{P}\bigg(\inf_{G\in \mathcal{G}}Z_G \leq \frac{d'}{4e^3}\Big(1-\frac{2}{d'}\Big)\bigg)\leq e^{-d'/2}< d^{-5}/2\ 
\]
for all $d$ large enough. Since $\widehat{\s}^2= \sigma^2    \frac{81}{d'} \inf_{G\in \mathcal{G}} Z_G^2$, we obtain that $\widehat{\sigma}^2\geq \sigma^2$ with probability at least $1-d^{-5}/2$ for all $d$ large enough. Combining this with  \eqref{eq:proof:prop1}, we get  \eqref{eq1_prop1} for all $d$ large enough.

\end{proof}
%%%

\begin{proof}[Proof of Theorem \ref{th:unknown_sigma}] 
We repeat the proof of Theorem~\ref{theorem_l0_upperbound} replacing there $\hat{L}_s$ by $\hat{L}_s'$  and  $\hat{s}$ by $\hat{s}'$. The difference is that, in view of \eqref{eq2_prop1}, the relation \eqref{proof_upper1}
now holds with   $c_{14}\beta \Phi^L(\s,s)$ instead of $\beta \Phi^L(\s,s)$, and we use the results of Lemmas \ref{lemma_power4}, \ref{lemma_probability} and \ref{lem:risk} related to $\hat{L}_s'$ rather than to~$\hat{L}_s$.  
\end{proof}
%%%

\section{Proofs of the lower bounds}

\subsection{Proof of  Theorem \ref{th:adaptation}}

Theorem \ref{th:adaptation} is an immediate consequence of the following lemma with $a=1/4$.
\begin{lemma}\label{proposition:adaptation}
For all $d\geq 6$, $a\in [1/4,1/2)$,   and  $s\geq d^{a}$, 
\begin{equation}\label{eq:definition_R_s}
R(s)\triangleq \inf_{\tilde{L}}\left\{\esp_0 (\tilde{L}-L)^2 \s^{-2}d^{-3a+1/2}+ \sup_{\t\in\T_s}\esp_\t (\tilde{L}-L)^2\big(\Phi^L(\s,s)\big)^{-1}\right\}\ge \frac{1/2 -a}{40}.
\end{equation}
\end{lemma}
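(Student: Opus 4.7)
My strategy is a weighted Bayesian two-point argument. I would take a prior $\pi$ on $\T_s$ with $L$ constant, bound the chi-squared divergence $\chi^2(P_\pi,P_0)$ (where $P_0$ is the law of $y$ under $\theta=0$), and conclude via pointwise minimization in $\tilde L$.

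Let $s_0=\min(s,\lceil\sqrt{d\log(d)/2}\,\rceil)$ and take $\pi$ uniform over $\{h\mathbf{1}_S:|S|=s_0\}$, with $h=\sigma\sqrt{\eta\log(1+d\log(d)/s_0^2)}$ and $\eta=(1/2-a)/10$; then $L(\theta)=s_0 h=:M$ is deterministic under $\pi$, with $M^2=\eta\,\Phi^L(\sigma,s_0)$. The chi-squared computation $1+\chi^2(P_\pi,P_0)=\esp_{S,S'}[\exp(h^2|S\cap S'|/\sigma^2)]$ (with $S,S'$ independent uniform size-$s_0$ subsets) is, by convex dominance of the hypergeometric law of $|S\cap S'|$ by $\mathrm{Binomial}(s_0,s_0/d)$, at most $\exp((s_0^2/d)(e^{h^2/\sigma^2}-1))$; applying $(1+x)^\eta\leq 1+\eta x$ for $\eta\in(0,1]$ simplifies this uniformly to $\chi^2\leq d^\eta-1$.

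Writing $w_1=\sigma^{-2}d^{1/2-3a}$ and $w_2=1/\Phi^L(\sigma,s)$ and using $\sup_{\T_s}\geq\esp_\pi$ gives
\[
R(s)\;\geq\;\inf_{\tilde L}\bigl[w_1\esp_0\tilde L^2+w_2\esp_{P_\pi}(\tilde L-M)^2\bigr].
\]
Pointwise minimization in $\tilde L(y)$ of the quadratic integrand $w_1 f_0(y)\tilde L^2+w_2 g(y)(\tilde L-M)^2$ (where $g$ is the marginal density of $P_\pi$ and $f_0$ that of $P_0$) yields $\tilde L^\star=Mw_2 g/(w_1 f_0+w_2 g)$, with value $M^2w_2-M^2w_2^2\int g^2/(w_1 f_0+w_2 g)\,dy$; bounding the integral by $(1+\chi^2)/w_1$ via $\int g^2/f_0\,dy=1+\chi^2$ gives
\[
R(s)\;\geq\; M^2w_2\Bigl[1-\tfrac{w_2(1+\chi^2)}{w_1}\Bigr]_{+}.
\]

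The conclusion $R(s)\geq(1/2-a)/40$ then follows from two checks: (i) the bracket is at least $1/2$, equivalent to $d^\eta\lesssim d^{1/2-3a}\Phi^L(\sigma,s)/(2\sigma^2)$, which since $\Phi^L(\sigma,s)\geq\sigma^2 d^{2a}(1-2a)\log d$ reduces to a polynomial inequality that holds for $d\geq 6$ with $\eta=(1/2-a)/10$; and (ii) $M^2 w_2=\eta\,\Phi^L(\sigma,s_0)/\Phi^L(\sigma,s)\geq\eta/2$, which equals $\eta$ when $s_0=s$ and is at least $\eta\log(3)/2\geq\eta/2$ when $s_0<s$ by~\eqref{dlogd0}. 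The main obstacle is uniform $\chi^2$ control across $s\in[d^a,d]$ --- in particular around the transition $s\asymp\sqrt{d\log d}$ where both $w_2/w_1$ and $\chi^2$ become large simultaneously --- and verifying that my exponents and constants are tight enough to give the claimed $(1/2-a)/40$ bound down to $d=6$.
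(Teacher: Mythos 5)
Your construction is essentially the paper's: the same uniform prior over equal-height sparse spikes, the same binomial-domination bound on $\chi^2(\mathbb{P}_\pi,\mathbf{P}_0)$, and a reduction of the weighted sum of risks to a comparison between $q\triangleq w_1/w_2$ and $1+\chi^2$. Your one real departure --- replacing the Chebyshev-plus-testing step by an exact pointwise Bayes minimization yielding $R(s)\ge M^2w_2\bigl[1-w_2(1+\chi^2)/w_1\bigr]_+$ --- is correct and even avoids the factor $1/4$ that the paper loses by thresholding at $M/2$. The problem is your check (i). Getting the bracket above $1/2$ requires $1+\chi^2\le q/2$ with $q=d^{1/2-3a}s^2\log(1+d(\log d)/s^2)\ge d^{1/2-a}\log(1+d^{1-2a}\log d)$. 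Writing $u=(1/2-a)\log d$, your bound $1+\chi^2\le d^{\eta}=e^{u/10}$ turns this into $e^{9u/10}\log(1+e^{2u}\log d)\ge 2$; as $a\to 1/2$ the left side tends to $\log(1+\log d)$, which at $d=6$ is about $1.03<2$. (With the cruder bound $\Phi^L(\s,s)\ge\s^2 d^{2a}(1-2a)\log d$ that you invoke, the requirement becomes $u e^{9u/10}\ge 1$, which fails for \emph{every} $a\in[1/4,1/2)$ at $d=6$, since $u\le(\log 6)/4\approx 0.45$.) So the inequality you describe as "a polynomial inequality that holds for $d\ge 6$" is false precisely in the delicate regime $a$ near $1/2$, and the constant $1/40$ is not established.

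This is a gap in constants rather than in method: since $e^{-9u/10}<1<\log(1+\log 6)$, your bracket is always at least $1-1/\log(1+\log 6)\approx 0.026$, so you do obtain $R(s)\ge c(1/2-a)$ with $c$ of order $1/800$ --- enough for Theorem \ref{th:adaptation} with different absolute constants, but not the statement as written. The paper sidesteps exactly this difficulty through Lemma \ref{lem:unbalanced}: the free parameter $\tau$ is taken close to $(2(1+\chi^2))^{-1}$, so the factor $1-\tau(1+\chi^2)$ is \emph{automatically} at least $1/2$, and the surviving requirement $q\tau>1/4$ only asks $q$ to exceed roughly $(1+\chi^2)/2$ rather than your $2(1+\chi^2)$; combined with keeping the factor $\log(1+d^{1-2a}\log d)\ge\log(1+\log 6)>1$ intact (instead of degrading it to $(1-2a)\log d$), this closes the case $a$ near $1/2$. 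To salvage your direct Bayes route you would need to either accept the weaker constant or optimize $\eta$ and the target value of the bracket jointly as functions of $u$.
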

\begin{proof}
We first introduce some notation. For a probability measure $\mu$ on $\Theta_s$, we denote by $\mathbb{P}_{\mu}$ the mixture probability measure $\mathbb{P}_{\mu}= \int_{\Theta_s}\mathbf{P}_{\theta}\mu( d\theta)$.  Let $\mathcal{S}(s,d)$ denote the set of all subsets of $\{1,\ldots, d\}$ of size $s$, and let $S$ be a set-valued random
variable uniformly distributed on $\mathcal{S}(s, d)$. For any $\rho>0$, denote by $\mu_{\rho}$ the distribution of the
random variable $\sigma \rho\sum_{j\in S} e_j$ where $e_j$ is the $j$th canonical basis vector in $\mathbb{R}^d$. Next, let
$\chi^2(Q,P)= \int (dQ /dP )^2 dP-1$
denote the chi-square divergence between two  probability measures $Q$ and~$P$ such that $Q\ll P$, and $\chi^2(Q,P)=+\infty$ if $Q\not\ll P$.

Take any $a\in [1/4,1/2)$ and  $s\geq d^{a}$. Set  
 \[
 \rho \triangleq \sqrt{(1/2-a)\log(1+d(\log d)/s^2)}=\sqrt{1/2-a}\big(\Phi^L(\s,s)\big)^{1/2}/(s\sigma).
 \] 
Consider the mixture distribution $\mathbb{P}_{\mu_{\rho}}$ with this value of $\rho$.
For any estimator $\tilde{L}$, we have  $\sup_{\t\in\T_s}\esp_\t (\tilde{L}-L)^2\ge \mathbb{E}_{\mu_{\rho}}(\tilde{L} - L)^2\ge \mathbb{E}_{\mu_{\rho}}(\tilde{L} - \mathbb{E}_{\mu_{\rho}}(L))^2 =\mathbb{E}_{\mu_{\rho}}(\tilde{L} - \s s\rho)^2$. Therefore, 
 \begin{eqnarray} \nonumber
  R(s)&\geq&\inf_{\tilde{L}}\left\{\mathbf{E}_{0}(\tilde{L} ^2) \s^{-2}d^{-3a+1/2}+ \mathbb{E}_{\mu_{\rho}}(\tilde{L} - \s s\rho)^2 \big(\Phi^L(\s,s)\big)^{-1}\right\} \\ \nonumber
 &\geq &  \frac{1/2-a}{4} \inf_{\tilde{L}} \left\{\mathbf{P}_0(\tilde{L}> \s s\rho /2)\s^{-2}d^{-3a+1/2}\Phi^L(\s,s)+ \mathbb{P}_{\mu_{\rho}}(\tilde{L}< \s s\rho /2)\right\}\\
 &\geq &  \frac{1/2-a}{4} \inf_{\mathcal{A}} \left\{\mathbf{P}_0(\mathcal{A})\s^{-2}d^{-3a+1/2}\Phi^L(\s,s)+ \mathbb{P}_{\mu_{\rho}}(\mathcal{A}^c)\right\},
 \label{eq:lower_tilde_L}
 \end{eqnarray}
where $\inf_{\mathcal{A}}$ denotes the infimum over all measurable events $\mathcal{A}$, and $\mathcal{A}^c$ denotes the complement of $\mathcal{A}$. It remains to prove that the expression in \eqref{eq:lower_tilde_L} is not smaller than $(1/2-a)/40$. This will be deduced from the following lemma, the proof of which is given at the end of this section.
\begin{lemma}\label{lem:unbalanced}
Let $P$ and $Q$ be two probability measures on a measurable space $({X},{\mathcal U})$. Then, for any $q>0$,
\begin{equation*}
\inf_{\mathcal{A}\in {\mathcal U}} \left\{P(\mathcal{A})q+ Q(\mathcal{A}^c)\right\}\ge \max_{0<\tau<1}\left[\frac{q\tau}{1+ q\tau}\big(1-\tau (\chi^2(Q,P)+1)\big)\right].
\end{equation*}
\end{lemma}

We now apply Lemma~\ref{lem:unbalanced} with $P=\mathbf{P}_0$, $Q=\mathbb{P}_{\mu_{\rho}}$, and 
\begin{equation}\label{q}
q=\s^{-2}d^{-3a+1/2}\Phi^L(\s,s)=s^2d^{-3a+1/2} \log\Big(1+ \frac{d(\log d)}{s^2}\Big).
\end{equation}
By Lemma 1 in \cite{CollierCommingesTsybakov2015}, the chi-square divergence $\chi^2(\mathbb{P}_{\mu_{\rho}},\mathbf{P}_{0})$ satisfies 
\[\chi^2(\mathbb{P}_{\mu_{\rho}},\mathbf{P}_{0})\leq \left(1- \frac{s}{d}+ \frac{s}{d}e^{\rho^2}\right)^{s} -1\leq \left(1+\frac{s}{d}\left(e^{\rho^2}-1\right)\right)^{s}\ .\]
Since $\rho^2=(1/2-a) \log\big(1+ \frac{d(\log d)}{s^2}\big)$, we find
\begin{eqnarray}
\label{chi}
\chi^2(\mathbb{P}_{\mu_{\rho}},\mathbf{P}_{0}) &\leq & \exp\left[s\log\left[1+ \frac{s}{d}\left(\Big(1+ \frac{d(\log d)}{s^2}\Big)^{1/2-a} -1\right)\right]\right] \\ \nonumber
&\leq & \exp\left[s\log\left(1+ (1/2-a)\frac{\log d}{s}\right)\right]\leq d^{1/2-a}\ , 
\end{eqnarray}
where we have used that   $(1+x)^{1/2-a}\le 1+(1/2-a)x$ for $x>0$. Take
 \begin{equation}\label{tt}
 \tau=(d^{1/2-a}+1)^{-1}/2. 
 \end{equation}
 Then, using \eqref{q} and the inequality $s\ge d^{a}$ we find
\begin{equation}\label{qt}
q\tau = \frac{s^2\log\Big(1+ \frac{d(\log d)}{s^2}\Big)}{2d^{3a-1/2} (d^{1/2-a}+1)}\ge \frac{d^{2a}\log(1+ d^{1-2a}(\log d))}{2d^{3a-1/2} (d^{1/2 -a}+1)}>\frac14, \quad \forall \ d\ge 6.
\end{equation}
Lemma~\ref{lem:unbalanced} and inequalities \eqref{chi} -- \eqref{qt} imply
\begin{eqnarray} \nonumber
  \inf_{\mathcal{A}} \left\{\mathbf{P}_0(\mathcal{A})\s^{-2}d^{-3a+1/2}\Phi^L(\s,s)+ \mathbb{P}_{\mu_{\rho}}(\mathcal{A}^c)\right\} \ge \frac{q\tau}{2(1+ q\tau)}\ge \frac1{10}.
 \label{eq:lower_tilde_L_1}
 \end{eqnarray}
\end{proof}

\begin{proof}[Proof of Lemma \ref{lem:unbalanced}] We follow the same lines as in the proof of Proposition 2.4 in \cite{Tsybakov2009}. Thus,  for any $\tau\in (0,1)$,
$$
P(\mathcal{A}) \ge \tau (Q(\mathcal{A}) - v), \quad \text{where} \ v= Q\left(\frac{dP}{dQ}<\tau \right) \le \tau (\chi^2(Q,P)+1).
$$
Then,
\begin{eqnarray*}
\inf_{\mathcal{A}} \left\{P(\mathcal{A})q+ Q(\mathcal{A}^c)\right\}&\ge &
\inf_{\mathcal{A}} \left\{q\tau (Q(\mathcal{A}) - v)+ Q(\mathcal{A}^c)\right\}
\\
&\ge& 
\min_{0\le t\le 1} \max (q\tau (t-v), 1-t) = \frac{q\tau(1-v)}{1+ q\tau}.
\end{eqnarray*}

\end{proof}

\subsection{Proof of Corollary \ref{cor1}}
First, note that condition \eqref{definition_optimality_criterion0} with $\Psi_d(s) = C\Phi^L(\s,s)$ is satisfied due to Theorem~\ref{theorem_l0_upperbound}. Next, the minimum in condition \eqref{def2} with $\Psi_d(s)= C\Phi^L(\s,s)$ can be only attained for $ s\ge d^{1/4}$, since for $ s< d^{1/4}$ we have $\Phi^L(\s,s)\asymp \psi^*_s$  where $ \psi^*_s$ is the minimax rate on $\T_s$. Thus, it is not possible to achieve a faster rate than $\Phi^L(\s,s)$ for $ s< d^{1/4}$, and therefore  \eqref{def2} is equivalent to the condition
$$
 \min_{s\ge d^{1/4}} \frac{\Psi_d'(s)}{\Phi^L(\s,s)} \to 0,
$$
and
$$
 \min_{s=1,\dots, d} \frac{\Psi_d'(s)}{\Phi^L(\s,s)} \asymp  \min_{s\ge d^{1/4}} \frac{\Psi_d'(s)}{\Phi^L(\s,s)}.
$$
Obviously, $\Psi_d'(s)$ cannot be of smaller order than the minimax rate $\psi^*_s$, which implies that 
$$
 \min_{s\ge d^{1/4}} \frac{\Psi_d'(s)}{\Phi^L(\s,s)} \ge \min_{s\ge d^{1/4}} \frac{c\psi^*_s}{\Phi^L(\s,s)}=\min_{s\ge d^{1/4}}\frac{c\log(1+d/s^2)}{\log(1+d(\log d)/s^2)}\ge \frac{c'}{\log d}
$$
where $c,c'>0$ are absolute constants. On the other hand, Theorem~\ref{th:adaptation} yields 
$$
\frac{C'\Psi_d'(1)}{\Phi^{L}(\s,1)}\geq  \frac{C'C_1\sigma^2d^{1/4}}{\Phi^{L}(\s,1)}= \frac{C'C_1d^{1/4}}{\log(1+ d(\log d))}\ .
$$
Combining the last three displays, we find 
$$
\frac{\Psi_d'(1)}{\Phi^{L}(\s,1)} \min_{s=1,\dots, d} \frac{\Psi_d'(s)}{\Phi^L(\s,s)} \ge \frac{c'C'C_1d^{1/4}}{(\log d)\log(1+ d(\log d))}\to \infty, 
$$
as $d\to \infty$, thus proving \eqref{def3} with $\bar{s}=1$.

\subsection{Proof of Proposition \ref{th:adaptation_sigma}}

Since in this proof we consider different values of $\s$, we denote the probability distribution of $(y_1,\ldots, y_d)$ satisfying~\eqref{model} by ${\bf P}_{\theta,\sigma^2}$. Let ${\bf E}_{\theta,\sigma^2}$ be the corresponding expectation. Assume that $\widehat{T}$ satisfies \eqref{eq:T1_sigma_adaptative} with $C_0=1/512$.  We will prove that \eqref{eq:T2_sigma_adaptative} holds for $\sigma=1$. The extension to arbitrary $\sigma>0$ is straightforward and is therefore omitted.

Let $a>1$ be a positive number and let $\mu$ be the $d$-dimensional normal distribution with zero mean and covariance matrix $a^2 {\bf I}_d$ where ${\bf I}_d$ is the identity matrix. In what follows, we consider the mixture probability measure $\mathbb{P}_{\mu}=\int_{\Theta_d}\mathbf{P}_{\theta,1}\mu( d\theta)$. Observe that $\mathbb{P}_{\mu}={\bf P}_{0,1+a^2}$.

Fixing $\theta=0$ and $\sigma^2=1+a^2$ in \eqref{eq:T1_sigma_adaptative}, we get $\mathbf{E}_{0,1+a^2}\big[\widehat{T}^2\big]\leq 2C_0a^2 d$ and therefore
$\mathbf{P}_{0,1+a^2}(|\widehat{T}|\geq  \tfrac{1}{8} a\sqrt{d}  )\leq \frac{1}{4}$. Since $\mathbb{P}_{\mu}={\bf P}_{0,1+a^2}$, this implies
\begin{equation}\label{eq:pmu1}
\mathbb{P}_{\mu}\big(|\widehat{T}|<  \frac{1}{8} a\sqrt{d}  \big)>\frac{3}{4}.  
\end{equation}
For $\theta$ distributed according to $\mu$, $L(\theta)$ has a normal distribution with mean 0 and variance $a^2d$. Hence, using the table of standard normal distribution, we find 
\[
 \mu\Big(|L(\theta)|\leq  \frac{a}{4} \sqrt{d}\Big)< \frac{1}{4}\ .
\]
Combining this with \eqref{eq:pmu1}, we conclude that, with $\mathbb{P}_{\mu}$-probability greater than $1/2$, we have simultaneously $|L(\theta)|> a\sqrt{d}/4$ and $|\widehat{T}|<  a \sqrt{d}/8$. Hence, 
\[
 \sup_{\theta\in \T_d}  \mathbf{E}_{\theta,1}\big[\big(\widehat{T}- L(\theta) \big)^2 \big]\geq  \mathbb{E}_{\mu}\big[(\widehat{T}-L(\theta))^2\big]\geq  \frac{1}{128} a^2 d
\]
where $\mathbb{E}_{\mu}$ denotes the expectation with respect to $\mathbb{P}_{\mu}$.  The result now follows by letting $a$ tend to infinity.

\vspace{5mm}

{\bf Acknowledgement.}
The work of A.B.Tsybakov was supported by GENES and by the French National Research Agency (ANR) under the grants
IPANEMA (ANR-13-BSH1-0004-02) and Labex Ecodec (ANR-11-LABEX-0047). It was also supported by the "Chaire Economie et Gestion des Nouvelles Donn\'ees", under the auspices of Institut Louis Bachelier, Havas-Media and Paris-Dauphine. The work of O. Collier has been conducted as part of the project Labex MME-DII (ANR11-LBX-0023-01).

\end{document}